\newcommand{\ZZ}{{\mathbb Z}}
\newcommand{\ol}[1]{\overline{#1}}
\DeclareMathOperator{\wild}{wld}
\DeclareMathOperator{\clb}{clb}
\DeclareMathOperator{\cub}{cub}
\newcommand{\standout}[1]{\textbf{#1}}    % style for terms defined in text
\tikzset{highlight/.style={opacity=0.25,line width=10},wild/.style={dotted,ultra thick}}
\theoremstyle{plain}
\newtheorem{theorem}{Theorem}[section]
\newtheorem{proposition}[theorem]{Proposition}
\newtheorem{lemma}[theorem]{Lemma} 
\newtheorem{corollary}[theorem]{Corollary}
\theoremstyle{definition}
\newtheorem{example}[theorem]{Example}
\newtheorem{definition}[theorem]{Definition}
\newtheorem{question}[theorem]{Question}
\newtheorem{construction}[theorem]{Construction}  
\newtheorem{alg}[theorem]{Algorithm}
\begin{document}

%%%%%%%%%%%%%%%%%%%%%%%%%%%%%%%%%%%%%%%%%%%%%%%%%%%%%%%%%%%%%%%%%%
% METADATA
\title[Wild Number]{The Wild Number of an Edge-Colored Graph}
\author[Anders]{Katie Anders}
\address{Department of Mathematics, University of Texas at Tyler, Tyler, TX, 75799}
\email{kanders@uttyler.edu}
\author[Foster-Greenwood]{Briana Foster-Greenwood}
\address{Department of Mathematics and Statistics, California State Polytechnic University, Pomona, CA, 91768}
\email{brianaf@cpp.edu}
\author[Garcia]{Rebecca Garcia}
\address{Department of Mathematics and Computer Science, Colorado College, Colorado Springs, CO, 80903}
\email{regarcia@coloradocollege.edu}
\author[krawzik]{Naomi Krawzik}
\address{Department of Mathematics and Statistics, Sam Houston State University, Huntsville, TX, 77320}
\email{krawzik@shsu.edu}
\date{\today}
% MSC Subjection Classification (https://zbmath.org/classification/)
\subjclass[2020]{05C70 (Primary) 05C15, 05C40, 05C25  (Secondary)}
% 05C15 Colorings of graphs and hypergraphs
% 05C25 Graphs and abstract algebra (groups, rings, fields, etc.)
% 05C40 Connectivity
% 05C70 Edge subsets with special properties (factorization, matching, partitioning, covering and packing, etc.)
% 05C78 Graph labelling
\keywords{wild number, graph splines, spanning trees, edge-coloring}

\begin{abstract}
We introduce the wild number of an edge-colored graph as a measure of how close an edge-colored graph is to having a spanning tree in every color. This combinatorial concept originates in the algebraic theory of generalized graph splines. After showing that determining the wild number of a graph is an NP-complete problem, we provide bounds on the wild number and find the exact wild number for trees, cycles, and families of graphs with restrictions on the edge-colorings. This article serves as an invitation to the topic of wild numbers and includes several open problems, many of which are suitable for undergraduate research projects. 
\end{abstract}

\maketitle

% \tableofcontents

%%%%%%%%%%%%%%%%%%%%%%%%%%%%%%%%%%%%%%%%%%%%%%%%%%%

%%%%%%%%%%%%%%%%%%%%%%%%%%%%%%%%%%%%%%%%%%%%%%%%%%%
\section{The Wild Number}\label{sec:introduction}

Suppose you represent a city with a graph where each vertex represents a location in that city, and each edge represents a transportation route. As an example, consider the graph in \cref{fig:components}. 
The orange routes (shown in subgraph $G_1$) are 
only suitable for biking; the purple routes (shown in subgraph $G_2$) are only suitable for driving; and the green routes (shown in subgraph $G_3$) are
only suitable for walking. The city would like to improve its connectivity by making some edges accessible to all modes of transportation. Which edges should be upgraded? What is the minimum number of edges that would need to be upgraded in order for a person to be able to travel between any two locations using entirely their preferred mode of transportation? To answer these and other questions arising from similar scenarios, we developed the notion of the wild number of an edge-colored graph as described below.

\subsection{Terminology and Notation} Let $G = (V,E)$ be a finite connected graph and let $\mathcal{C}$ be a set of colors. 
Define an edge-coloring on $G$ to be a function $\gamma: E(G) \rightarrow \mathcal{C}$.   An edge-colored graph $(G,\gamma)$ is a connected graph $G$ together with an edge-coloring $\gamma$.  In this paper, we will assume $\gamma$ is surjective, but we do not assume that $\gamma$ is a proper edge coloring. Throughout this paper, we use $(G, \gamma)$ to denote a graph $G$ with vertex set $V(G)$, edge set $E(G)$, and edge-coloring $\gamma:E(G)\rightarrow\mathcal{C}$.  When context is clear, we often refer to the edge-colored graph simply as $G$ rather than as $(G,\gamma)$.  We let $n=|V(G)|, m=|E(G)|$, and $\ell=|\mathcal{C}|$.

For any color $c_i\in\mathcal{C}$, let $E_i(G)=\{e\in E(G):\gamma(e)=c_i\}$, and let $G_i$ be the graph with vertex set $V(G)$ and edge set $E_i(G)$. For any $1\leq i\leq \ell$, we use the notation $\kappa(G_i)$ for the number of connected components of the monochromatic subgraph $G_i$.  We let $\kappa_{\mathcal{C}}(G)=\sum_{i=1}^{\ell}\kappa(G_i)$, the sum of the number of components of all monochromatic subgraphs $G_1, G_2, \ldots, G_{\ell}$.

\begin{figure}
    \centering
    \begin{minipage}{.5\textwidth}\centering
        \begin{tikzpicture}
        \def \radius {8pt};
        \draw (-.75,.75) node {$G:$};
        \draw[blue,highlight,scale=1.5] (0,0)--(1,0)--(1,1);
        \draw[blue,highlight,scale=1.5] (2,1)--(3,1)--(3,0);
        \draw[orange,highlight,scale=1.5] (0,0)--(0,1);
        \draw[orange,highlight,scale=1.5] (1,0)--(2,0)--(2,1);
        \draw[teal,highlight,scale=1.5] (0,1)--(1,1)--(2,1);
        \draw[teal,highlight,scale=1.5] (2,0)--(3,0);
       \draw[gray,fill=white] (1.5*0,1.5*1) circle[radius=\radius] node {\footnotesize $v_{1}$};
       \draw[gray,fill=white] (1.5*1,1.5*1) circle[radius=\radius] node {\footnotesize $v_{2}$};
       \draw[gray,fill=white] (1.5*2,1.5*1) circle[radius=\radius] node {\footnotesize $v_{3}$};
       \draw[gray,fill=white] (1.5*3,1.5*1) circle[radius=\radius] node {\footnotesize $v_{4}$};
       \draw[gray,fill=white] (1.5*3,1.5*0) circle[radius=\radius] node {\footnotesize $v_{5}$};
       \draw[gray,fill=white] (1.5*2,1.5*0) circle[radius=\radius] node {\footnotesize $v_{6}$};
       \draw[gray,fill=white] (1.5*1,1.5*0) circle[radius=\radius] node {\footnotesize $v_{7}$};
       \draw[gray,fill=white] (1.5*0,1.5*0) circle[radius=\radius] node {\footnotesize $v_{8}$};
    \end{tikzpicture}
    \end{minipage}\begin{minipage}{.5\textwidth}\centering
        \begin{tikzpicture}
        \draw (-.75,.5) node {$G_1:$};
        \draw[orange,highlight,scale=1] (0,0)--(0,1);
        \draw[orange,highlight,scale=1] (1,0)--(2,0)--(2,1);
        \foreach \x in {0,1,2,3}
          \foreach \y in {0,1}
             \draw[gray,fill=white] (\x,\y) circle[radius=5pt] {};
    \end{tikzpicture}\vskip36pt\begin{tikzpicture}
    \draw (-.75,.5) node {$G_2:$};
        \draw[blue,highlight,scale=1] (0,0)--(1,0)--(1,1);
        \draw[blue,highlight,scale=1] (2,1)--(3,1)--(3,0);
        \foreach \x in {0,1,2,3}
          \foreach \y in {0,1}
             \draw[gray,fill=white] (\x,\y) circle[radius=5pt] {};
    \end{tikzpicture}\vskip36pt
    \begin{tikzpicture}
    \draw (-.75,.5) node {$G_3:$};
        \draw[teal,highlight,scale=1] (0,1)--(1,1)--(2,1);
        \draw[teal,highlight,scale=1] (2,0)--(3,0);
        \foreach \x in {0,1,2,3}
          \foreach \y in {0,1}
             \draw[gray,fill=white] (\x,\y) circle[radius=5pt] {};
    \end{tikzpicture}
    \end{minipage}
    
    \caption{A graph $G$ with $3$ edge colors and its monochromatic subgraphs $G_1$, $G_2$, and $G_3$. There are five orange components, four purple components, and five green components.}
    \label{fig:components}
\end{figure}
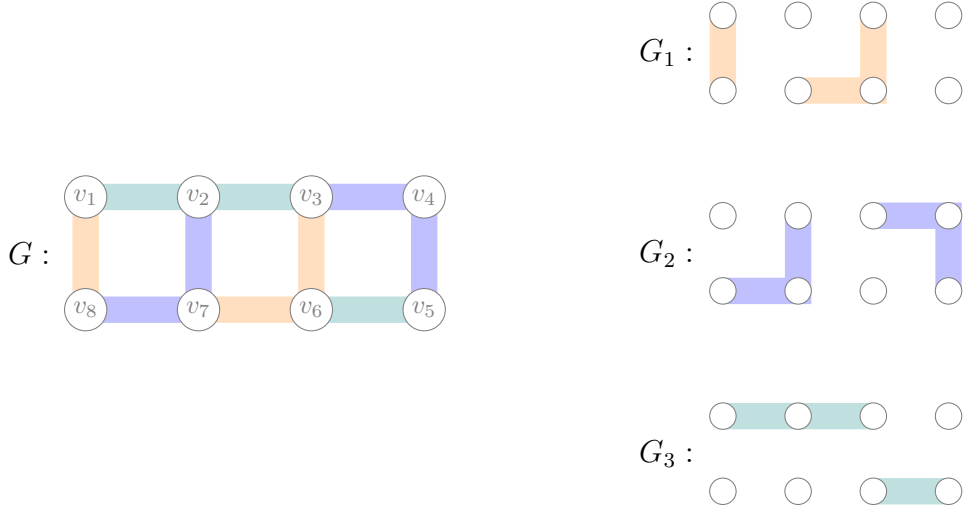

Let $W\subseteq E(G)$.  For any color $c_i$, define the graph $G_i^W$ to be the subgraph of $G$ with vertex set $V(G)$ and edge set $W\cup E_i(G)$.  If the set of edges $W$ is such that for every $c_i\in C$, the subgraph $G_i^W$ is connected, then $W$ is a \textbf{color-connecting set}.  
We say that we \textbf{color an edge $e$ wild} if $e \in W$. A color-connecting set of minimal  size is an \textbf{ideal wild set}. The \textbf{wild number} of $G$, denoted $\wild(G)$, is the size of an ideal wild set of $G$. In other words, the wild number is the minimum number of edges that need to be colored wild for the graph to have a spanning tree in every color. 

Coloring an edge wild in $G$ can connect components for some colors, thus bringing the graph closer to having a spanning tree in those colors. The color $c_i$ needs $\kappa(G_i)-1$ connections since at least that many wild edges are needed for $G_i^W$ to be connected.
 We say that the edge $uv$ \standout{helps} the color $c_i$ if coloring $uv$ wild connects two components in the subgraph $G_i$, that is, $\kappa(G_i^{ uv }) < \kappa(G_i)$.  
The \standout{dip number} of an edge $e$ is \[\delta(e)=\#\{i:\text{edge $e$ helps color $c_i$}\}=\sum_{i=1}^{\ell}(\kappa(G_i)-\kappa(G_i^{e})),\]
which tells us how many colors we can help by coloring that edge wild.
The dip number of a set of edges $W$ is $\sum_{i=1}^{\ell}(\kappa(G_i)-\kappa(G_i^W))$. Note that $\delta(W)\leq\sum_{e\in W}\delta(e)$.

  \begin{example}
  
    To illustrate these definitions, consider the graph $G$ in \cref{fig:components}, along with its monochromatic subgraphs $G_1$ with edge set $E_1=\{v_1v_8,v_3v_6,v_6v_7,\}$; $G_2$ with edge set $E_2=\{v_2v_7,v_7v_8,v_4v_5\}$; and $G_3$ with edge set $E_3=\{v_1v_2,v_2v_3,v_5v_6\}$. 
    We assign the colors $c_1$, $c_2$, and $c_3$ to be the colors orange, purple, and green, respectively. 
    Then we have $\kappa(G_1)=5$, $\kappa(G_2)=4$, and $\kappa(G_3)=5$. 
    Orange and green each need $4$ connections, while purple needs $3$ connections.
   Note that $\kappa_{\mathcal{C}}(G)=\kappa(G_1)+\kappa(G_2)+\kappa(G_3)=5+4+5=14$, the total number of connected components across all monochromatic subgraphs of $G$, and the total number of connections needed for this graph is $11$. If edge $v_1v_2$ were colored wild, the number of components in $G_1$ and $G_2$ would decrease by one, while the number of components in $G_3$ would remain the same. Therefore, the dip number of edge $v_1v_2$ is $\delta(v_1v_2)=2$. 
   Let $W=E(G)-\{v_1v_2, v_2v_3, v_3v_4,v_3v_6\}$.  One can see that this is a color-connecting wild set.  The dip lower bound, described in \cref{sec:bounds}, lets us know that this is the smallest size possible of a color-connecting wild set for this graph, and therefore $W$ is an ideal wild set and $\wild(G) = 6$.  
\end{example}

% outline of paper

\subsection{Organization} The goal of this paper is to introduce the wild number of an edge-colored graph and provide some foundational results.   In \cref{sec:classic}, we provide a couple of potential applications and connections to other established mathematical concepts with the goal of encouraging our dear readers to consider their own applications and connections to the wild number of $G$. In \cref{sec:families}, we investigate the wild number of families of graphs such as trees and cycles. We show that the decidability question of whether a graph has a wild number bounded above by $k$ is an NP-hard problem in \cref{sec:complexity} and explore various upper and lower bounds for the wild number in \cref{sec:bounds}. In  \cref{sec:colors}, we provide results for extremal cases on the number of colors used in coloring the edges. We end the paper with a set of open questions in \cref{sec:conclusion}.

%%%%%%%%%%%%%%%%%%%%%%%%%%%%%%%%%%%%%%%%%%%%%%%%%%%
%%%%%%%%%%%%%%%%%%%%%%%%%%%%%%%%%%%%%%%%%%%%%%%%%%%

\section{Connections to Other Mathematical  Concepts}\label{sec:classic}

As context and motivation for our work, we first relate wild numbers of edge-colored graphs to problems in algebraic and classical graph theory. 

% https://style.mla.org/commas-with-et-al/
\subsection{Connection to Generalized Algebraic Graph Splines}
Our novel concept of the wild number of an edge-colored graph was motivated initially by previous work by Anders et al.~\cite{REUFGroup} on edge-labeled graphs that admit only constant splines. Classically, a spline is a piecewise polynomial function on a polyhedral complex where the polynomials agree up to some degree of smoothness at the intersections of the faces. This concept has since been generalized 
to be defined over edge-labeled graphs and is studied in numerical analysis, algebraic geometry, and homological algebra \cite{Gilbert, Schenck}. Given a ring $R$ and a graph $G(V,E)$, an \textbf{edge-labeling} on $G$ is a function $\alpha: E(G)\rightarrow I(R)$, where $I(R)$ is the 
set of ideals in $R$. A \textbf{generalized graph spline} on the edge-labeled graph $(G,\alpha)$ is a function $\rho:V(G)\rightarrow R$ such that for any edge $uv\in E(G)$, the difference $\rho(u)-\rho(v)$ is in the ideal $\alpha(uv)$.

A \textbf{constant spline} on an edge-labeled graph $(G,\alpha)$ over a ring $R$ is a spline $\rho$ such that for some $r\in R$, we have $\rho(v)=r$ for all $v\in V(G)$.
Any edge-labeled graph admits constant splines, since choosing any fixed element of the ring as the assignment for each vertex in the graph produces a generalized graph spline. Moreover, note that if an edge is labeled as the $0$-ideal, then the two vertices incident to that edge must be identically labeled in any generalized graph spline. In~\cite{REUFGroup}, Anders et al.\ determine conditions on the edge-labeling of graphs over $\mathbb{Z}_m$ that force all of the generalized graph splines to be constant for a given edge-labeled graph. 
The result \cite[Corollary 3.10]{REUFGroup} states that for each factor $p_i^{e_i}$ in the prime factorization of $m$, the graph must contain a spanning tree $T_i$ such that all edge labels of $T_i$ are contained in the ideal $(p_i^{e_i})$.  When the graph contains such a spanning tree for each factor in the prime factorization of $m$, the graph admits only constant splines.

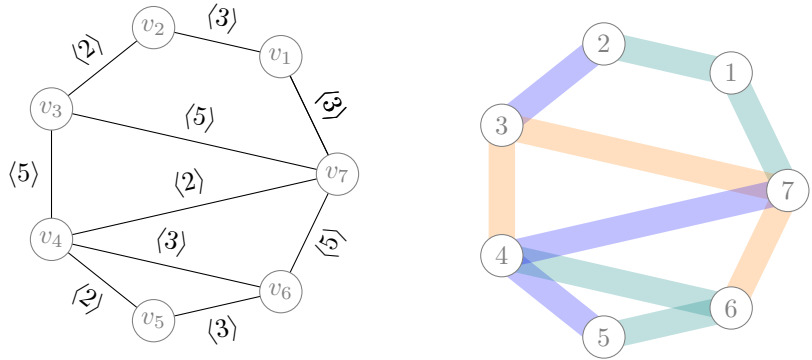
\begin{figure}
      \begin{tikzpicture}
        \def \radius {8pt};
        \def \va {(1*360/7:2)};
        \def \vb {(2*360/7:2)};
        \def \vc {(3*360/7:2)};
        \def \vd {(4*360/7:2)};
        \def \ve {(5*360/7:2)};
        \def \vf {(6*360/7:2)};
        \def \vg {(7*360/7:2)};
        % cycle
        \draw \va--\vb node[midway,sloped,above] {\footnotesize $\langle 3 \rangle $};
        \draw \vb--\vc node[midway,sloped,above] {\footnotesize $\langle 2 \rangle$};
        \draw \vc--\vd node[midway,left] {\footnotesize $\langle 5 \rangle$};
        \draw \vd--\ve node[midway,sloped,below] {\footnotesize $\langle 2 \rangle$};
        \draw \ve--\vf node[midway,sloped,below] {\footnotesize $\langle 3 \rangle $};
        \draw \vf--\vg node[midway,sloped,below] {\footnotesize $\langle 5 \rangle$};
        \draw \vg--\va node[midway,sloped,above] {\footnotesize $\langle 3 \rangle $};
        \draw \vg--\va node[midway,sloped,above] {\footnotesize $\langle 3 \rangle $};
        \draw \vc--\vg node[midway,sloped,above] {\footnotesize $\langle 5 \rangle$};
        \draw \vd--\vf node[midway,sloped,above] {\footnotesize $\langle 3 \rangle $}; 
        \draw \vd--\vg node[midway,sloped,above] {\footnotesize $\langle 2 \rangle$};

        \foreach \pt/\name in {\va/{v_1},\vb/{v_2},\vc/{v_3},\vd/{v_4}, \ve/{v_5}, \vf/{v_6}, \vg/{v_7}}
         \draw[gray,fill=white] \pt circle[radius=\radius] node {\footnotesize ${\name}$};
    \end{tikzpicture} \hspace{.5in} \begin{tikzpicture}
        \def \radius {8pt};
        \def \va {(1*360/7:2)};
        \def \vb {(2*360/7:2)};
        \def \vc {(3*360/7:2)};
        \def \vd {(4*360/7:2)};
        \def \ve {(5*360/7:2)};
        \def \vf {(6*360/7:2)};
        \def \vg {(7*360/7:2)};
        % cycle
        \draw[teal,highlight] \va--\vb; %1
        \draw[blue,highlight] \vb--\vc; %2
        \draw[orange,highlight] \vc--\vd; %3
        \draw[blue,highlight] \vd--\ve; %4
        \draw[teal,highlight] \ve--\vf; %5
        \draw[orange,highlight] \vf--\vg; %6
        \draw[teal,highlight] \vg--\va; %7
        \draw[orange,highlight] \vc--\vg; %9
        \draw[teal,highlight] \vd--\vf; %10
        \draw[blue,highlight] \vd--\vg; %11

        \foreach \pt/\name in {\va/{1},\vb/{2},\vc/{3},\vd/{4}, \ve/{5}, \vf/{6}, \vg/{7}}
         \draw[gray,fill=white] \pt circle[radius=\radius] node {\footnotesize ${\name}$};
    \end{tikzpicture} 
    \caption{A graph with edges labeled by ideals in $\ZZ_{30}$ and its corresponding edge-colored graph. On the left, the edge label $\langle 2 \rangle$ corresponds with coloring the edge \texttt{purple}, $\langle 3 \rangle$ with \texttt{green} and $\langle 5 \rangle$ with \texttt{orange}.} 
    \label{constant spline}
\end{figure}

A natural question to explore is how far away a graph is from having only constant splines. Specifically, while working on \cite{REUFGroup}, Alissa Crans posed the question: ``Given an edge-labeled graph $(G, \alpha)$, what is the minimum number of edges one must relabel as the $0$-ideal to force the graph to admit only constant splines?'' 
In the next example, we illustrate how, for certain graph labelings, this is equivalent to the question ``What is the minimum number of edges that need to be colored wild in order for the graph to have a spanning tree in every color?''

\begin{example}
    Consider the edge-labeled graph  $(G, \alpha)$ illustrated in \cref{constant spline}, where $R = \mathbb{Z}_{30}$. An example of a non-constant spline is $(11,8,12,2,8,17,2)$, where the $i$-th coordinate is $\rho(v_i)$. Consider the set of edges $W=\{v_2v_3, v_3v_4, v_5v_6, v_1v_7\}$.  In the graph on the left, if we label these edges with the $0$-ideal, then this graph will admit only constant splines. Similarly, if we color these edges wild in the graph on the right, there will be a spanning tree in every color. It turns out that this could not be done with less edges (as 4 is the ceiling lower bound, which will be discussed in \cref{sec:bounds}) and so the answer to both questions posed above is $4$.  
\end{example}
\subsection{Connection to Edge-Disjoint Spanning Trees and the Spanning Tree Packing Number of a Graph}\label{sub: EDST}
With the concept of the wild number of a graph established, there are a host of research directions one can develop simply by varying the given conditions that characterize this notion. In particular, among the defining parameters are (i) the finite graph $G$, (ii) the edge-coloring $\gamma$, (iii) the wild number $\wild(G)$, and (iv) the number of colors $\ell$. Thus, as an example, one direction of research, described in \cref{sec:bounds} and \cref{sec:colors} of this paper, is to determine (bounds for) the wild number of a given graph $G$ equipped with an edge-coloring $\gamma$. 

In this subsection, we consider the following question: Given a graph $G$ and a desired wild number, say $\wild(G) = w$, determine the existence and possible construction of an edge-coloring $\gamma: E(G) \to \mathcal{C}$ that produces the desired wild number. In particular, we briefly explore this question here for $\wild(G) = 0$. As it turns out, this investigation connects the concept of the wild number of a graph to two classical topics in graph theory: (a) edge-disjoint spanning trees and (b) Hamiltonian decomposition of a graph.

Edge-disjoint spanning trees, introduced by Nash-Williams \cite{Nash-W} and Tutte \cite{Tutte}, provide an equivalent characterization for when a given graph $G$ would have an edge-coloring that yields $\wild(G) = 0$. We restate the theorem below:

\begin{theorem}[Theorem 1 in \cite{Nash-W}]\label{EDST}
    A graph $G(V,E)$ has $\ell$ edge-disjoint spanning trees if and only if \[\vert E_{\mathbf{P}}(G) \vert \geq \ell \left( \vert \mathbf{P} \vert - 1\right), \] for every partition $\mathbf{P}$ of $V$.
\end{theorem}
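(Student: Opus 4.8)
The plan is to prove the two implications separately: the forward direction is a short contraction-and-counting argument, while the reverse direction is best handled by invoking the matroid union theorem for the cycle matroid.

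First I would treat necessity. Suppose $G$ contains $\ell$ pairwise edge-disjoint spanning trees $T_1,\dots,T_\ell$, and let $\mathbf{P}$ be an arbitrary partition of $V$ into $|\mathbf{P}|=p$ parts. Contracting each part of $\mathbf{P}$ to a single vertex turns each $T_j$ into a connected (multi)graph on $p$ vertices, which must therefore retain at least $p-1$ edges; the edges of $T_j$ that survive the contraction are precisely the edges of $T_j$ lying in $E_{\mathbf{P}}(G)$. Since the $T_j$ are edge-disjoint, summing over $j$ gives $|E_{\mathbf{P}}(G)|\geq\sum_{j=1}^{\ell}|E(T_j)\cap E_{\mathbf{P}}(G)|\geq\ell(p-1)$, which is the stated inequality.

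For sufficiency I would pass to the cycle matroid $M(G)$, whose rank function is $r(F)=n-c(F)$, where $c(F)$ denotes the number of connected components of the spanning subgraph $(V,F)$. The graph $G$ has $\ell$ edge-disjoint spanning trees exactly when $E(G)$ contains $\ell$ pairwise disjoint bases of $M(G)$, and by the matroid union theorem the maximum possible size of a disjoint union of $\ell$ independent sets of $M(G)$ equals $\min_{F\subseteq E(G)}\bigl(|E(G)\setminus F|+\ell\, r(F)\bigr)$. Hence the $\ell$ disjoint spanning trees exist if and only if this minimum is at least $\ell(n-1)$, that is, if and only if
\[ |E(G)\setminus F|\;\geq\;\ell\bigl(c(F)-1\bigr)\qquad\text{for every } F\subseteq E(G). \]
It then remains to verify that this family of inequalities indexed by edge subsets is equivalent to the family indexed by partitions. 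Given a partition $\mathbf{P}$, take $F$ to be the set of edges having both endpoints in a common part: then $E(G)\setminus F=E_{\mathbf{P}}(G)$ and $c(F)\geq|\mathbf{P}|$, so the edge-subset inequality yields $|E_{\mathbf{P}}(G)|\geq\ell(|\mathbf{P}|-1)$. Conversely, given $F$, let $\mathbf{P}$ be the partition of $V$ into the connected components of $(V,F)$: then $|\mathbf{P}|=c(F)$ and $E_{\mathbf{P}}(G)\subseteq E(G)\setminus F$, so $|E(G)\setminus F|\geq|E_{\mathbf{P}}(G)|\geq\ell(c(F)-1)$. (Note in passing that taking $\mathbf{P}$ to be the partition into components of $G$ itself forces $G$ to be connected, so that bases of $M(G)$ are genuinely spanning trees.) This establishes the equivalence and completes the proof.

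The main obstacle is the matroid union theorem itself, which is doing the real work and which I am taking as a black box; a fully self-contained treatment would either reprove it here or, following the original arguments of Nash-Williams~\cite{Nash-W} and Tutte~\cite{Tutte}, run a direct tree-by-tree augmentation: starting from $\ell$ edge-disjoint forests and repeatedly using the hypothesis to find an edge that can be rerouted along a path so as to strictly decrease the number of components of some forest without increasing it for any other. In that approach the delicate step is precisely the construction of such an augmenting exchange whenever the partition condition holds but the current forests are not yet all spanning.
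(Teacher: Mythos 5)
Your argument is correct, but there is nothing in the paper to compare it against: the paper states this result purely as a citation (Theorem~1 of Nash-Williams, also due to Tutte) and supplies no proof, so any proof you give is by definition "a different route." Your necessity direction --- contract each part of $\mathbf{P}$, observe that each contracted tree is still connected on $|\mathbf{P}|$ vertices and hence retains at least $|\mathbf{P}|-1$ crossing edges, and sum over the edge-disjoint trees --- is the standard short argument and is airtight. Your sufficiency direction via the matroid union theorem applied to $\ell$ copies of the cycle matroid is also correct: the rank formula $\min_{F}\bigl(|E\setminus F|+\ell\,r(F)\bigr)$ with $r(F)=n-c(F)$ gives exactly the edge-subset condition $|E\setminus F|\geq\ell(c(F)-1)$, and your two-way translation between edge subsets $F$ and partitions $\mathbf{P}$ (using $c(F)\geq|\mathbf{P}|$ one way and $E_{\mathbf{P}}(G)\subseteq E\setminus F$ the other) is exactly right, as is the parenthetical observation that the partition into components of $G$ forces connectivity so that bases really are spanning trees. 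You are honest that the matroid union theorem is doing all the work; that is the accepted modern derivation (Edmonds/Nash-Williams), whereas the original 1961 proofs of Nash-Williams and Tutte are the direct forest-augmentation arguments you sketch in your final paragraph, which are self-contained but substantially more delicate. For the purposes of this paper, which only uses the theorem as a black box to characterize when an edge-coloring with $\wild(G)=0$ exists, either level of detail is more than sufficient.
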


Here, $\mathbf{P}$ is a partition of the vertex set $V$ and the set $E_{\mathbf{P}}(G)$ is the set of edges in $G$ that join vertices belonging to different parts of the partition $\mathbf{P}$. Because of \cref{EDST}, we know precisely when a graph $G$ has $\ell$ edge-disjoint spanning trees, and thus we know precisely when there exists an edge-coloring $\gamma: E(G) \to \{c_1, c_2,\ldots, c_\ell \}$ that produces a wild number 0 for $G$. In particular, when $G$ satisfies the inequality in \cref{EDST}, then there exist $\ell$ edge-disjoint spanning trees $T_{1},T_2, \ldots, T_{\ell}$. Thus, we can define an edge-coloring
\[ 
\gamma(e)= \begin{cases} 
      c_i & \text{if $e \in E(T_{i})$} \\
      c_1 & \text{if $e \notin E(T_{i})$ for all $i$,}
      \end{cases}
\]
which yields the following observation:

\begin{theorem}
    A graph $G(V,E)$ has an edge-coloring $\gamma: E(G) \to \{c_1,c_2, \ldots, c_\ell \}$ such that $\wild(G) = 0$ if and only if \[\vert E_{\mathbf{P}}(G) \vert \geq \ell \left( \vert \mathbf{P} \vert - 1\right), \] for every partition $\mathbf{P}$ of $V$.
\end{theorem}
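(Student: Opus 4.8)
The plan is to deduce this directly from \cref{EDST} by showing that the existence of an edge-coloring $\gamma$ with $\wild(G)=0$ is equivalent to $G$ having $\ell$ edge-disjoint spanning trees; the partition inequality then transfers verbatim. I would prove the two implications separately.

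For the forward direction, suppose $\gamma\colon E(G)\to\{c_1,\dots,c_\ell\}$ satisfies $\wild(G)=0$. By definition, $\wild(G)=0$ means the empty set is a color-connecting set, so for each $i$ the graph $G_i^{\emptyset}=G_i$ is connected. Since $G_i$ has vertex set $V(G)$, a connected $G_i$ contains a spanning tree $T_i$ of $G$. Because $\gamma$ is a function, the monochromatic edge sets $E_1(G),\dots,E_\ell(G)$ are pairwise disjoint, and hence $T_1,\dots,T_\ell$ are pairwise edge-disjoint spanning trees. Applying \cref{EDST} gives $|E_{\mathbf{P}}(G)|\ge \ell(|\mathbf{P}|-1)$ for every partition $\mathbf{P}$ of $V$.

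For the reverse direction, assume the inequality holds for every partition $\mathbf{P}$. By \cref{EDST}, $G$ has edge-disjoint spanning trees $T_1,\dots,T_\ell$. Define $\gamma$ exactly as in the paragraph preceding the statement: $\gamma(e)=c_i$ if $e\in E(T_i)$, and $\gamma(e)=c_1$ otherwise. This is well-defined precisely because the $T_i$ are edge-disjoint, so no edge receives two colors, and it is surjective since each $T_i$ is a spanning tree of the connected graph $G$ and hence nonempty when $n\ge 2$ (the case $n=1$ is vacuous, as then $E(G)=\emptyset$). Each monochromatic subgraph $G_i$ contains $E(T_i)$, so $G_i$ is connected; thus the empty set is a color-connecting set and $\wild(G)=0$.

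I do not expect a genuine obstacle here: the substantive work is entirely contained in \cref{EDST}, and the rest is the bookkeeping observation that the constraint ``exactly one color per edge'' coincides with ``edge-disjoint,'' combined with the explicit coloring already exhibited in the text. The only points requiring a moment of care are the well-definedness and surjectivity of the constructed $\gamma$ and, in the forward direction, the elementary fact that a connected spanning subgraph contains a spanning tree.
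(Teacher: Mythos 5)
Your proposal is correct and follows essentially the same route as the paper: both reduce the statement to \cref{EDST} via the equivalence between an edge-coloring with $\wild(G)=0$ and a family of $\ell$ edge-disjoint spanning trees, using the same explicit coloring for the reverse direction. You spell out the forward direction (disjointness of the monochromatic edge sets yields edge-disjoint spanning trees) more carefully than the paper, which leaves it implicit, but there is no substantive difference in approach.
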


As noted in network design and communication protocols, having a high number of edge-disjoint spanning trees, i.e., the spanning tree packing number of a graph $G$ ${\rm STP}(G)$, corresponds to the idea of having an edge-coloring on $G$ with a high number of colors such that the wild number is zero. This implies that the wild number of an edge-colored graph can be considered as another indicator of how close a graph is to being more resilient to edge failures or attacks, where the ideal wild sets point to the edges in a network that require extra security so that the network maintains its connectivity, even with multiple edge removals.

We can also produce colorings that yield wild number zero using Hamiltonian decompositions.
Recall that a graph $G$ has a {Hamiltonian decomposition} if there is a partition of the edge set $E(G)$ into Hamiltonian cycles. Similar to the previous discussion on edge-disjoint spanning trees, a graph $G$ with a Hamiltonian decomposition given by the partition of the edge set $E(G) = E_{1} \cup \cdots \cup E_{\ell}$ induces a coloring $\gamma: E(G) \to \{ c_{1}, \ldots, c_{\ell} \}$ of the edges of $G$ in such a way that the resulting graph has wild number 0. In this manner, one can also think of these wild concepts as a generalization of a Hamiltonian decomposition. 

%%%%%%%%%%%%%%%%%%%%%%%%%%%%%%%%%%%%%%%%%%%%%%%%%%%
%%%%%%%%%%%%%%%%%%%%%%%%%%%%%%%%%%%%%%%%%%%%%%%%%%%
\section{Graph Families and Operations}\label{sec:families}
In this section we discuss some families of graphs for which the wild number is easily computed.  It is worth noting here that if $\gamma$ is not surjective then the wild number is $n-1$. Additionally, if $\ell=1$ then the wild number is zero. Thus, throughout this section, we assume $\ell=|\mathcal{C}|\geq 2$.
%%%%%%%%%%%%%%%%%%%%%%%%%%%%%%%%%%%%%%%%%%%%%%%%%%%%%
\subsection{Trees}
 We begin with some results about the wild number for edge-colored trees. Recall that a bridge in a connected graph is an edge whose deletion disconnects the graph. The next lemma shows that every bridge in a graph must be colored wild to ensure there is a spanning tree in every color. 
\begin{lemma}\label{bridges}
   Let $G$ be an edge-colored graph with $\ell\geq2$.  For any bridge $e$ in $E(G)$, $e$ must belong to $W$ for every color-connecting wild set $W$ of $G$. 
\end{lemma}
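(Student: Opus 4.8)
The plan is to argue by contradiction: suppose $e = uv$ is a bridge in $G$ and $W$ is a color-connecting wild set with $e \notin W$. Since $\gamma$ is surjective and $\ell \geq 2$, there is some color $c_i$ with $\gamma(e) \neq c_i$; in fact, since there are at least two colors, we can pick a color $c_i$ different from $\gamma(e)$. The key observation is that deleting $e$ from $G$ separates $V(G)$ into two nonempty vertex sets $A$ and $B$ (with $u \in A$, $v \in B$), and the only edge of $G$ crossing between $A$ and $B$ is $e$ itself.

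Next I would examine the subgraph $G_i^W$, which has vertex set $V(G)$ and edge set $W \cup E_i(G)$. For $W$ to be a color-connecting set, $G_i^W$ must be connected, so in particular there must be a path in $G_i^W$ from $u$ to $v$. Every such path must use at least one edge crossing the cut $(A, B)$. But the only edge of $G$ crossing that cut is $e$, and $e \notin W$ by assumption, while $e \notin E_i(G)$ since $\gamma(e) \neq c_i$. Hence no edge of $G_i^W$ crosses the cut, so $u$ and $v$ lie in different components of $G_i^W$, contradicting connectedness. Therefore $e \in W$.

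The only mild subtlety — and the one place to be careful — is ensuring that a color $c_i \neq \gamma(e)$ actually exists; this is exactly where the hypothesis $\ell \geq 2$ is used, together with surjectivity of $\gamma$ guaranteeing that some edge (hence some color class) differs from $\gamma(e)$. Apart from that, the argument is just the standard fact that a bridge is the unique edge across the cut it induces, combined with the definition of a color-connecting set. I do not expect any real obstacle here; the proof is short and essentially a direct unwinding of the definitions.
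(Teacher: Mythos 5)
Your proposal is correct and follows the same route as the paper: pick a color $c_i$ different from $\gamma(e)$ and observe that $G_i^W$ cannot be connected because the bridge $e$ is the only edge across its cut and lies in neither $W$ nor $E_i(G)$. The paper's proof is simply a terser statement of this same argument, so your version just makes the cut reasoning explicit.
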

\begin{proof}
Suppose $W$ is a color-connecting wild set of $G$ and there exists a bridge $e$ in $E(G)$ such that $e$ is of color $c_j$ and $e\not\in W$.  Then for any color $c_i$ with $i\neq j$, $G_i^W$ is not connected, so $W$ cannot be a color-connecting wild set for $G$.  
\end{proof}

\begin{corollary} 
    If $T$ is an order $n$ tree with $\ell\geq2$ edge colors, then $\wild(T)=n-1$.
\end{corollary}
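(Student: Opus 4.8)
The plan is to observe that in a tree, every single edge is a bridge, and then invoke \cref{bridges} directly. First I would note that by definition, a tree $T$ of order $n$ has exactly $n-1$ edges and is connected, and moreover the removal of any edge disconnects $T$ (since a tree has no cycles, every edge lies on the unique path between its endpoints, so deleting it leaves those endpoints in separate components). Hence every edge of $T$ is a bridge. Applying \cref{bridges}, which requires $\ell \geq 2$, every bridge must lie in every color-connecting wild set $W$; therefore $E(T) \subseteq W$, giving $|W| \geq n-1$ for any such $W$, and in particular $\wild(T) \geq n-1$.

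For the reverse inequality, I would check that $W = E(T)$ is itself a color-connecting wild set: for each color $c_i$, the subgraph $T_i^W$ has edge set $W \cup E_i(T) = E(T)$, which is all of $T$ and hence connected (indeed spanning). So $\wild(T) \leq |E(T)| = n-1$. Combining the two bounds yields $\wild(T) = n-1$.

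There is essentially no obstacle here — the only things to be careful about are the standing assumption $\ell \geq 2$ (needed to apply \cref{bridges}; without it the wild number of a tree is $0$, consistent with the remark opening \cref{sec:families}) and the elementary fact that every edge of a tree is a bridge, which could be stated as a one-line justification or simply cited as standard.
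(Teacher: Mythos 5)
Your proof is correct and follows the paper's argument exactly: every edge of a tree is a bridge, so \cref{bridges} forces all $n-1$ edges into any color-connecting wild set. The only addition is your explicit verification that $W=E(T)$ is itself color-connecting, which the paper leaves implicit; this is a harmless and reasonable bit of extra care.
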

\begin{proof} 
    Every edge in $T$ is a bridge. Apply \cref{bridges}.
\end{proof}

%%%%%%%%%%%%%%%%%%%%%%%%%%%%%%%%%%%%%%%%%%%%%%%%%%
\subsection{Cycles}

We next determine the wild number for cycle graphs. The result depends only on the number of vertices and colors, not on the particular edge-coloring.
An example is shown in \cref{fig:cycles}.

\begin{proposition}\label{prop:cycle wild number}
    If $C_n$ is an edge-colored cycle with $n$ vertices and $\ell$ colors, then
    \[\wild(C_n)=\begin{cases}
        n-2 & \text{if $\ell=2$} \\
        n-1 & \text{if $\ell>2$.}
    \end{cases}\]
\end{proposition}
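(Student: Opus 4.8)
The plan is to prove the two cases separately, using the dip lower bound (from \cref{sec:bounds}) for the lower bounds and explicit constructions for the upper bounds. Throughout, write $\kappa_{\mathcal{C}}(C_n) = \sum_{i=1}^{\ell} \kappa(G_i)$ for the total component count across monochromatic subgraphs, and recall that any color-connecting wild set $W$ must supply at least $\kappa(G_i)-1$ connections to color $c_i$, so $\sum_i(\kappa(G_i)-1) = \kappa_{\mathcal{C}}(C_n)-\ell$ is a lower bound on $\delta(W)$, hence on $|W|$ once we also control how many colors a single wild edge can help.

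First I would handle the lower bounds. For $\ell > 2$: every edge $e$ of $C_n$ lies on a single cycle, so deleting $e$ leaves a path; adding $e$ back as wild to a monochromatic subgraph $G_i$ can reduce $\kappa(G_i)$ by at most $1$, and it does so only for colors $c_i$ whose two endpoints of $e$ lie in different components of $G_i$. Since the two endpoints of any edge are joined in $G_i$ exactly when that edge's own color is $c_i$ or there is a monochromatic-$c_i$ path between them, I claim $\delta(e) \le \ell - 1$ in general but more sharply that $\delta(e) \le \ell - 1$ is not yet enough — I need to count $\kappa_{\mathcal{C}}(C_n)$ exactly. A cleaner route: on a cycle, for each color $c_i$ the edges of color $c_i$ form $\kappa(G_i) - (n - |E_i|)$... actually the simplest is: $G_i$ has $n$ vertices and $|E_i|$ edges and is a disjoint union of paths, so $\kappa(G_i) = n - |E_i|$, hence $\kappa_{\mathcal{C}}(C_n) = \ell n - m = \ell n - n = n(\ell-1)$. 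Therefore $\sum_i(\kappa(G_i)-1) = n(\ell-1) - \ell$. When $\ell > 2$ I want $\wild(C_n) \ge n-1$; since $|W|$ edges colored wild each help at most... here is where I must be careful, because a wild edge can help up to $\ell$ colors in principle. The right argument for $\ell>2$ is structural rather than pure counting: if some edge $e$ (say of color $c_j$) is left out of $W$, then for $W$ to connect color $c_i$ ($i \ne j$) the set $W \cup E_i$ must span $C_n$, and since $C_n - e$ is a path on $n$ vertices, $W \cup E_i$ restricted to that path must be connected; doing this simultaneously for all $i \ne j$ forces $W$ to contain a spanning connected subgraph of the path $C_n - e$ minus... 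I would instead argue directly: leaving out two edges $e, e'$ disconnects $C_n$ into two arcs, and then no single color among the $\ell \ge 2$ colors other than those of $e,e'$ can be spanned — combined with the case analysis this pins down that at most one edge can be omitted when $\ell>2$, and exactly two when $\ell=2$, which is the crux.

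The case $\ell = 2$ lower bound $\wild(C_n) \ge n-2$: leaving out two edges $e,e'$ of distinct colors $c_1, c_2$ disconnects the cycle into two arcs $A, B$; I would check that $W = E(C_n) \setminus \{e, e'\}$ still color-connects — within each arc all edges are wild except possibly none are removed, so both $G_1^W$ and $G_2^W$ are connected since each arc becomes fully wild and the two arcs are joined at their shared endpoints through wild edges. Conversely, omitting three or more edges, or two edges of the same color, breaks some color, so $n-2$ is optimal; for the upper bound the same $W$ of size $n-2$ works, provided the two omitted edges genuinely have distinct colors, which is possible exactly because $\ell = 2$ and surjectivity guarantees at least one edge of each color. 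For $\ell > 2$, the upper bound $\wild(C_n) \le n-1$ comes from omitting a single well-chosen edge: pick any edge $e_0$ and set $W = E(C_n)\setminus\{e_0\}$; then for every color $c_i$, $G_i^W$ contains the spanning path $C_n - e_0$ (all its edges are wild or in $E_i$), so it is connected.

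The main obstacle I anticipate is the $\ell > 2$ lower bound: the naive dip count only gives $|W| \ge \lceil (n(\ell-1)-\ell)/\delta_{\max}\rceil$, and since a wild edge on a cycle could a priori help all $\ell$ colors, this is too weak. The fix is the deletion argument above — showing that if $|W| \le n-2$ then we may assume two cycle edges $e, e'$ are absent from $W$, these two edges have some colors among $\mathcal{C}$, and since $\ell \ge 3$ there is a third color $c_i$ distinct from both; then $G_i^W \subseteq C_n - \{e,e'\}$ is disconnected, contradiction. I would also need the small-case sanity check that $C_n$ with $\ell = n$ (every edge a different color) indeed needs $n-1$ wild edges, which this argument covers, and I should state explicitly that $n \ge 3$ and $\ell \le n$ so the cases are nonvacuous.
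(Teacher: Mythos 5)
Your proof takes essentially the same route as the paper: for $\ell=2$, omit one edge of each color to obtain a color-connecting set of size $n-2$; for $\ell\ge 3$, observe that any two edges $e,e'$ missing from $W$ carry at most two colors, so some color $c_i$ avoids both and $G_i^W$ is contained in the disconnected graph $C_n-\{e,e'\}$ --- this is the paper's argument in slightly different clothing, and it is correct. The one place you assert rather than prove is the $\ell=2$ lower bound when three or more edges are omitted ("omitting three or more edges \ldots breaks some color"): the paper closes this with a count --- if $|W|=n-3$, connectivity of $G_1^W$ forces at least two of the three missing edges to have color $c_1$, and connectivity of $G_2^W$ forces at least two to have color $c_2$, requiring four edges among three. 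With that one line supplied your argument is complete; the dip-bound detour you correctly abandon is indeed too weak here and is not needed.
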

\begin{proof}

    Suppose $\ell=2$ and $\mathcal{C}=\{c_1,c_2\}$. Let $e_1$ be an edge in $C_n$ with color $c_1$ and $e_2$ be an edge in $C_n$ with color $c_2$. Let $W=E-\{e_1,e_2\}$. Then $|W\cup \{e_i\}|=n-1$ for $i=1,2$, 
    so both $G_1^W$ and $G_2^W$ are connected, and $W$ is a color-connecting set. 
    Thus $\wild(C_n)\leq n-2$ when $\ell=2$.
    
    To show that $\wild(G)$ is not less than $n-2$, suppose $W$ is a color-connecting set of size $n-3$. Then for $G_1^W$ to be connected, two of the three remaining edges not in $W$ must be color $c_1$, but for $G_2^W$ to be connected, we would also need two more edges of color $c_2$. This gives a total of four more edges, but there are only three edges not in $W$, so this is impossible.
    
   Now suppose $\ell\geq 3$ and let $W$ be a set of $n-2$ edges, i.e., a set containing all but two edges of $G$. Since $\ell\geq 3$, there exists a color $c_i$ such that $E_i\subseteq W$. Then $|E_i\cup W|=|W|<n-1$, so $G_i^W$ is not connected. Thus there is no color-connecting wild set of size $n-2$, and we can conclude $\wild(G)=n-1$.
\end{proof}

\begin{figure}
    \centering
    {}\hfill
        \begin{tikzpicture}
        \def \radius {6pt};
        \def \va {(1*60:2)};
        \def \vb {(2*60:2)};
        \def \vc {(3*60:2)};
        \def \vd {(4*60:2)};
        \def \ve {(5*60:2)};
        \def \vf {(6*60:2)};
        \draw[blue,highlight] \va--\vb;
        \draw[blue,highlight] \vb--\vc;
        \draw[orange,highlight] \vc--\vd;
        \draw[blue,highlight] \vd--\ve;
        \draw[orange,highlight] \ve--\vf;
        \draw[orange,highlight] \vf--\va;
        \draw[wild] \vf--\va--\vb;
        \draw[wild] \vc--\vd--\ve;
        \foreach \pt/\name in {\va/{1},\vb/{2},\vc/{3},\vd/{4},\ve/{5},\vf/{6}}
         \draw[gray,fill=white] \pt circle[radius=\radius];
    \end{tikzpicture}\hfill
    \begin{tikzpicture}
        \def \radius {6pt};
        \def \va {(1*60:2)};
        \def \vb {(2*60:2)};
        \def \vc {(3*60:2)};
        \def \vd {(4*60:2)};
        \def \ve {(5*60:2)};
        \def \vf {(6*60:2)};
        \draw[blue,highlight] \va--\vb;
        \draw[blue,highlight] \vb--\vc;
        \draw[orange,highlight] \vc--\vd;
        \draw[teal,highlight] \vd--\ve;
        \draw[orange,highlight] \ve--\vf;
        \draw[orange,highlight] \vf--\va;
        \draw[wild] \va--\vb--\vc--\vd--\ve--\vf;
        \foreach \pt/\name in {\va/{1},\vb/{2},\vc/{3},\vd/{4},\ve/{5},\vf/{6}}
        \draw[gray,fill=white] \pt circle[radius=\radius];
    \end{tikzpicture}\hfill{}
    \caption{Ideal wild sets for cycle graphs. Dotted edges are wild. Two colors (left): choose all but one edge of each color to be wild. More than two colors (right): choose all but one edge to be wild.}
    \label{fig:cycles}
\end{figure}
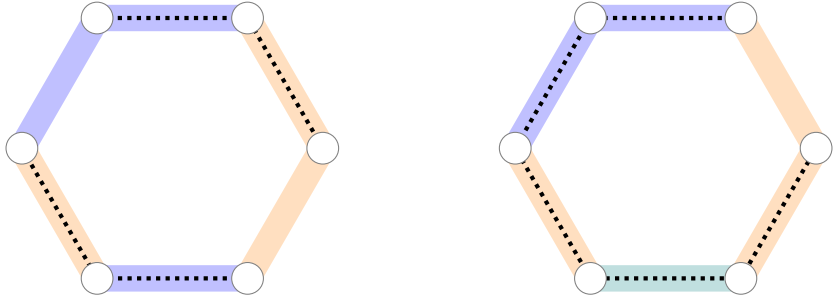

%%%%%%%%%%%%%%%%%%%%%%%%%%%%%%%%%%%%%%%%%%%%%%%%%
\subsection{Vertex Amalgamation}

Let $G$ and $H$ be connected graphs. If $u$ and $v$ are vertices in $G$ and $H$, respectively, then the \standout{vertex amalgamation} $(G\cup H)/\{u=v\}$ is the graph obtained from $G\cup H$ by identifying vertex $u$ with vertex $v$. Intuitively, we think of gluing $G$ and $H$ together at a vertex.

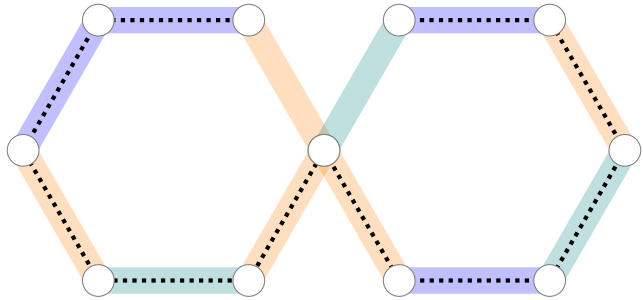
\begin{figure}
    \centering
    \begin{tikzpicture}
        \def \radius {6pt};
        \def \va {(1*60:2)};
        \def \vb {(2*60:2)};
        \def \vc {(3*60:2)};
        \def \vd {(4*60:2)};
        \def \ve {(5*60:2)};
        \def \vf {(6*60:2)};
        \draw[blue,highlight] \va--\vb;
        \draw[blue,highlight] \vb--\vc;
        \draw[orange,highlight] \vc--\vd;
        \draw[teal,highlight] \vd--\ve;
        \draw[orange,highlight] \ve--\vf;
        \draw[orange,highlight] \vf--\va;
        \draw[wild] \va--\vb--\vc--\vd--\ve--\vf;
        \foreach \pt/\name in {\va/{1},\vb/{2},\vc/{3},\vd/{4},\ve/{5},\vf/{6}}
         \draw[gray,fill=white] \pt circle[radius=\radius];
        \draw[shift=({4,0}),blue,highlight] \va--\vb;
        \draw[shift=({4,0}),teal,highlight] \vb--\vc;
        \draw[shift=({4,0}),orange,highlight] \vc--\vd;
        \draw[shift=({4,0}),blue,highlight] \vd--\ve;
        \draw[shift=({4,0}),teal,highlight] \ve--\vf;
        \draw[shift=({4,0}),orange,highlight] \vf--\va;
        \draw[shift=({4,0}),wild] \vf--\va--\vb;
        \draw[shift=({4,0}),wild] \vc--\vd--\ve--\vf;
         \foreach \pt/\name in {\va/{1},\vb/{2},\vc/{3},\vd/{4},\ve/{5},\vf/{6}}
         \draw[shift=({4,0}),gray,fill=white] \pt circle[radius=\radius];
    \end{tikzpicture}
    \caption{Amalgamation of two cycles of wild number $5$. The amalgamation has wild number $10$.}
    \label{fig:amalgamation}
\end{figure}

\begin{proposition}\label{prop:amalgamation}
   Let $(G,\gamma_G)$ and $(H,\gamma_H)$ be edge-colored graphs. If $u$ and $v$ are vertices in $G$ and $H$, respectively, then the wild number of the vertex amalgamation graph $(G\cup H)/\{u=v\}$ is the sum of the wild numbers of $G$ and $H$, i.e.,
    \[\wild((G\cup H)/\{u=v\})=\wild(G)+\wild(H),\]
    where the wild numbers of $G$ and $H$ are computed with respect to the set of colors of $G\cup H$.
\end{proposition}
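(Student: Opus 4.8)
The plan is to prove the two inequalities $\wild((G\cup H)/\{u=v\}) \le \wild(G)+\wild(H)$ and $\wild((G\cup H)/\{u=v\}) \ge \wild(G)+\wild(H)$ separately. Write $K = (G\cup H)/\{u=v\}$ and let $w$ denote the identified vertex. The key structural fact to exploit is that $w$ is a cut vertex of $K$: every path in $K$ between a vertex of $G\setminus\{u\}$ and a vertex of $H\setminus\{v\}$ must pass through $w$, and $E(K)$ is the disjoint union $E(G)\sqcup E(H)$ with the color classes of $K$ restricting to the color classes of $G$ and of $H$ on these two pieces. Note, however, that a color appearing in $G$ may or may not also appear in $H$; the proposition says the wild numbers of $G$ and $H$ are computed ``with respect to the set of colors of $G\cup H$,'' so a color $c$ present only in $H$ still counts for $G$, where $G_c$ has no edges and thus $|V(G)|$ components. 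I should make sure the argument handles this bookkeeping uniformly, and it is cleanest to just treat $\mathcal{C}$ as the common color set for all three graphs throughout.

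For the upper bound, I would take ideal wild sets $W_G\subseteq E(G)$ and $W_H\subseteq E(H)$ and show $W = W_G\cup W_H$ is a color-connecting set for $K$. Fix a color $c$. In $K$, the subgraph on edges $E_c(K)\cup W$ decomposes across the cut vertex $w$: its restriction to $V(G)$ is exactly $G_c^{W_G}$ and its restriction to $V(H)$ is exactly $H_c^{W_H}$, and these two subgraphs share only the vertex $w$. Since $G_c^{W_G}$ is connected (as $W_G$ is color-connecting for $G$) and $H_c^{W_H}$ is connected, their union glued at $w$ is connected. Hence $K_c^{W}$ is connected for every $c$, so $W$ is color-connecting and $\wild(K)\le |W_G|+|W_H| = \wild(G)+\wild(H)$ (the union is disjoint since $E(G)\cap E(H)=\emptyset$).

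For the lower bound, I would take any color-connecting set $W$ of $K$ of minimum size and split it as $W = W_G \sqcup W_H$ where $W_G = W\cap E(G)$ and $W_H = W\cap E(H)$. The claim is that $W_G$ is color-connecting for $G$ and $W_H$ is color-connecting for $H$. Fix a color $c$; I need $G_c^{W_G}$ connected. Here I use that $K_c^W$ is connected and that $w$ separates $V(G)\setminus\{w\}$ from $V(H)\setminus\{w\}$ in $K$. Take two vertices $x,y\in V(G)$; a path from $x$ to $y$ in $K_c^W$ can be rerouted to stay in $V(G)$, because each maximal subpath that leaves $V(G)$ does so and returns through $w$ (the unique cut vertex), so it can be replaced by the single vertex $w$; this produces a walk from $x$ to $y$ using only edges of $E_c(K)\cup W$ that lie in $E(G)$, i.e.\ in $E_c(G)\cup W_G$. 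Hence $G_c^{W_G}$ is connected, and symmetrically for $H$. Therefore $|W_G|\ge \wild(G)$ and $|W_H|\ge \wild(H)$, giving $\wild(K)=|W|=|W_G|+|W_H|\ge \wild(G)+\wild(H)$.

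The main obstacle is the path-rerouting argument in the lower bound: one must argue carefully that a connecting path (or walk) in $K_c^W$ between two vertices of $G$ can be pushed entirely into $G$. The clean way is to invoke the cut-vertex structure --- $w$ is the unique vertex of $K$ lying in both $V(G)$ and $V(H)$, and $K\setminus w$ is disconnected with $G\setminus w$ and $H\setminus w$ in different components --- so any walk leaving $G$ must re-enter through $w$, and the portion outside $G$ can be deleted. A subtlety worth a remark is that $W$ itself may contain edges of $E(H)$ that a shortest-looking path through $H$ seems to ``need,'' but the cut-vertex argument shows such detours are never necessary for connecting two vertices of $G$. With that lemma-style observation in hand, both inequalities are short, and equality follows.
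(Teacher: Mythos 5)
Your proof is correct and follows essentially the same route as the paper: both arguments rest on decomposing a color-connecting set of the amalgamation as a disjoint union $W_G\sqcup W_H$ across the cut vertex and minimizing each piece separately. The paper states this decomposition without proof, whereas you supply the cut-vertex rerouting argument that justifies it; this is a welcome elaboration rather than a different approach.
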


\begin{proof}
    Let $A=(G\cup H)/\{u=v\}$. Since $G$ and $H$ share no edges, any spanning tree of $A$ is the amalgamation of a spanning tree for $G$ and a spanning tree for $H$, and any color-connecting wild set $W\subseteq E(A)$ can be expressed as a disjoint union $W=W_G\cup W_H$ for some color-connecting wild sets  $W_G\subseteq E(G)$ and $W_H\subseteq E(H)$. 
 Since $W_G$ and $W_H$ are disjoint, $|W|$ is minimized when $|W_G|$ and $|W_H|$ are minimized.
 Thus $\wild(A)=\wild(G)+\wild(H)$.
\end{proof}

Recall that a \standout{block} in a graph is a maximal connected subgraph, and any graph $G$ can be expressed as an amalgamation of its blocks, glued at the cut vertices of $G$. Repeated application of \cref{prop:amalgamation} allows us to determine the wild number of a graph based on the wild numbers of the blocks that compose the graph. An example is shown in \cref{fig:amalgamation}.

\begin{corollary}
    If $G$ is a graph with blocks $B_1,B_2,\ldots,B_k$, then the wild number of the graph $G$ is the sum of the wild numbers of the blocks, i.e.,
    \[\wild(G)=\wild(B_1)+\wild(B_2)+\cdots+\wild(B_k),\]
    where the wild number of each block is computed with respect to the set of colors for $G$.
\end{corollary}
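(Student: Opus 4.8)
The plan is to prove this by induction on the number of blocks $k$, using \cref{prop:amalgamation} as the inductive step. The base case $k=1$ is trivial: if $G$ has a single block, then $G=B_1$ and the claim reads $\wild(G)=\wild(B_1)$, which holds tautologically.

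For the inductive step, suppose the result holds for all graphs with fewer than $k$ blocks, and let $G$ have blocks $B_1,\ldots,B_k$ with $k\geq 2$. The key structural fact I would invoke is that the block--cut-tree of $G$ is a tree; in particular it has a leaf. Choose a block $B_k$ that corresponds to a leaf of the block--cut-tree, so that $B_k$ meets the rest of $G$ in exactly one cut vertex $v$. Let $G'$ be the subgraph of $G$ induced by all edges not in $B_k$ (equivalently, the union of the other blocks). Then $G'$ is connected, its blocks are exactly $B_1,\ldots,B_{k-1}$, and $v$ is a vertex of $G'$. By construction $G=(G'\cup B_k)/\{v=v\}$ is the vertex amalgamation of $G'$ and $B_k$ at $v$ (here $B_k$ and $G'$ share no edges, only the vertex $v$). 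Applying \cref{prop:amalgamation} gives $\wild(G)=\wild(G')+\wild(B_k)$, where both wild numbers are computed with respect to the color set of $G$. Since $G'$ has $k-1$ blocks, the induction hypothesis yields $\wild(G')=\wild(B_1)+\cdots+\wild(B_{k-1})$, and combining the two displays finishes the step.

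The one point requiring a little care — and the place I would expect a careful reader to want detail — is the claim that removing a leaf block leaves a connected graph whose blocks are precisely the remaining $B_i$, and that the amalgamation hypothesis of \cref{prop:amalgamation} is genuinely met (no shared edges, glued at a single vertex). This is standard block--cut-tree theory: a leaf block shares exactly one vertex with the rest of the graph, distinct blocks are edge-disjoint, and deleting the non-cut vertices and internal edges of a leaf block does not disconnect anything. I would cite these facts rather than reprove them. A secondary subtlety is the phrase ``computed with respect to the set of colors for $G$'': some $B_i$ may not contain every color of $G$, but \cref{prop:amalgamation} is already stated with exactly this convention, so no extra argument is needed — one simply carries the ambient color set through every application. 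No essential obstacle remains beyond bookkeeping.
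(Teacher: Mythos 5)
Your proposal is correct and is essentially the paper's argument: the paper's proof is literally ``Repeatedly apply \cref{prop:amalgamation},'' and your induction on a leaf block of the block--cut-tree is just the careful version of that repetition. The details you flag (edge-disjointness of blocks, a leaf block meeting the rest in a single cut vertex, carrying the ambient color set through each application) are exactly the right points to make explicit, and none of them presents an obstacle.
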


\begin{proof}
    Repeatedly apply \cref{prop:amalgamation}.
\end{proof}

%%%%%%%%%%%%%%%%%%%%%%%%%%%%%%%%%%%%%%%%%%%%%%%%%
\section{Complexity}\label{sec:complexity}

In this section, we show the problem of determining whether a graph has an ideal wild set with at most  $k$ edges is NP-complete.  We refer to this decision problem as $k$-WILD:
\begin{quote}
  \standout{$k$-WILD}. Given an edge-colored multigraph $(G,\gamma)$ and a nonnegative integer $k$, does there exist an ideal wild set $W\subseteq E(G)$ with $|W|\leq k$?
\end{quote}
To prove $k$-WILD is NP-hard, we show that $3$-SAT (a known NP-complete problem) reduces to $k$-WILD.

We first recall the setup for the $3$-SAT problem.
Let $X=\{x_1,x_2,\ldots,x_k\}$ be a set of Boolean variables and let $\ol{X}=\{\ol{x}_1,\ol{x}_2,\ldots,\ol{x}_k\}$  be the set of their negations. 
The elements of $U=X\cup \ol{X}$ are called \standout{literals}. 
A \standout{clause over $X$} is a disjunction of three distinct literals (e.g., $x_1\lor\ol{x}_2\lor x_4$). Let $S(\ell,k)$ be the set of statements that can be expressed as a conjunction of $\ell$ clauses over a set of $k$ variables, where without loss of generality we assume
\begin{itemize}
    \item the clauses are distinct (do not have the same set of literals);
    \item a variable and its negation do not appear in the same clause; and
    \item each variable appears in at least one clause.
\end{itemize}
The $3$-SAT decision problem is:
\begin{quote}
    \standout{$3$-SAT.} Given a statement $F$ in $S(\ell,k)$, does there exist an assignment of truth values to the variables that makes the statement $F$ true (i.e., is $F$ \standout{satisfiable})?
\end{quote}
To show $3$-SAT reduces to $k$-WILD, we start with a statement $F$ and construct (in polynomial time) a graph $\mathcal{G}(F)$ that has wild number $k$ if and only if $F$ is satisfiable.

\begin{construction}\label{gadgetgraph}
Let $F\in S(\ell,k)$ be a statement with $\ell$ clauses  $C_1,C_2,\ldots,C_{\ell}$ over the set of variables $X=\{x_1,x_2,\ldots,x_k\}$, and let $U=X\cup \ol{X}$. Write $C_i=L_{i1}\lor L_{i2}\lor L_{i3}$, where $\{L_{i1},L_{i2},L_{i3}\}\subseteq U$. Define an edge-colored multigraph $\mathcal{G}(F)$ with $\ell+2$ colors $c_1,c_2,\ldots,c_{\ell+2}$ with the following vertices and edges:

\begin{description}
\item[Vertices] Add
   vertices $C_i$ for $1\leq i\leq \ell$; 
   vertices $x_j$, $\overline{x_j}$, and $y_j$ for $1\leq j\leq k$; and
   vertex $z$. 

\item[Edges] For all $1\leq i,i'\leq \ell$, $1\leq j\leq k$ and $u_j\in \{x_j,\ol{x}_j\}$, add
\begin{enumerate}[(a)]
    \item\label{type a} edge $C_iu_j$ in color $c_i$ if $u_j$ appears in clause $C_i$;
    \item\label{type b} edges $y_ju_j$ and $zu_j$ in color $c_i$ if $u_j$ does not appear in clause $C_i$; 
    \item\label{type c} edge $C_{i'}u_j$ in color $c_i$ if $u_j$ appears in clause $C_{i'}$ but not in clause $C_i$;
    \item\label{type d} edges $C_iL_{i1}$, $C_{i}L_{i2}$, $C_{i}L_{i3}$, and $zu_j$ in color $c_{\ell+1}$; and
    \item\label{type e} edges $C_iL_{i1}$, $C_{i}L_{i2}$, $C_{i}L_{i3}$, $zu_j$, and $y_ju_j$ in color $c_{\ell+2}$.
\end{enumerate}
\end{description}
We refer to the edges formed in steps \ref{type a}--\ref{type e} as edges of types \ref{type a}--\ref{type e}.
An edge may be added in multiple colors, yielding a multigraph with parallel edges. 

\end{construction}

Before giving an example of the construction, we describe the connected components in each color.

\begin{lemma}\label{crazygraphcomponents}
    Let $G=\mathcal{G}(F)$, where $F$ is a statement in $S(\ell,k)$. 
    For $1\leq i\leq \ell$, the subgraph $G_i$ has two components: $\{C_i,L_{i1},L_{i2},L_{i3}\}$ and $V(G)-\{C_i,L_{i1},L_{i2},L_{i3}\}$.
    For $i=\ell+1$, the subgraph $G_{\ell+1}$ has $k+1$ components: $\{y_1\}$, $\{y_2\}$, \ldots, $\{y_k\}$, and $V(G)-\{y_1,y_2,\ldots,y_k\}$. For $i=\ell+2$, the subgraph $G_{\ell+2}$ is connected.
\end{lemma}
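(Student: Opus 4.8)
The plan is to verify the claimed component structure for each color class $c_i$ directly from the edge-definitions in \cref{gadgetgraph}, treating the three ranges $1\le i\le \ell$, $i=\ell+1$, and $i=\ell+2$ separately. In each case I would first list exactly which edges receive color $c_i$, then argue connectivity within the claimed components and the absence of edges between them.

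For a fixed $i$ with $1\le i\le \ell$, the color-$c_i$ edges come from types \ref{type a}, \ref{type b}, and \ref{type c}. First I would show that $V(G)\setminus\{C_i,L_{i1},L_{i2},L_{i3}\}$ is connected in $G_i$: every literal vertex $u_j$ not among $L_{i1},L_{i2},L_{i3}$ is joined to $z$ by a type \ref{type b} edge $zu_j$ (since such a $u_j$ does not appear in $C_i$), and is joined to $y_j$ by a type \ref{type b} edge $y_ju_j$; moreover each clause vertex $C_{i'}$ with $i'\ne i$ is joined (via type \ref{type c}) to some literal vertex of $C_{i'}$, and I must check that literal can be chosen outside $\{L_{i1},L_{i2},L_{i3}\}$ — this uses that $C_{i'}$ and $C_i$ are distinct clauses, so $C_{i'}$ contains a literal not in $C_i$. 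The vertices $y_j$ and $x_j,\ol x_j$ for $j$ not involved in $C_i$ are swept in through $z$. One subtlety: if some $u_j$ equals one of the $L_{ir}$ but the other literal $\bar u_j$ on variable $x_j$ is not, then $\bar u_j$ connects to $y_j$ and $z$ as above, so $y_j$ is still in the big component; I would spell this out. Next I would show $\{C_i,L_{i1},L_{i2},L_{i3}\}$ is connected via the three type \ref{type a} edges $C_iL_{ir}$, and that no color-$c_i$ edge leaves this set: type \ref{type a} edges only touch $C_i$; type \ref{type b} edges $y_ju_j, zu_j$ only exist when $u_j\notin C_i$, hence never touch $L_{i1},L_{i2},L_{i3}$, and never touch $C_i$ at all; type \ref{type c} edges $C_{i'}u_j$ have $i'\ne i$ so they never touch $C_i$, and $u_j\notin C_i$ so they never touch the $L_{ir}$. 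Hence exactly two components.

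For $i=\ell+1$, the color-$c_{\ell+1}$ edges are the type \ref{type d} edges: $C_iL_{ir}$ for all $i,r$ and $zu_j$ for all $j$. Each $C_i$ is joined to its three literals, every literal vertex $u_j$ is joined to $z$, and (since each variable appears in some clause and each clause has its literals attached) every literal vertex is reached, so $V(G)\setminus\{y_1,\ldots,y_k\}$ is connected; the $y_j$ vertices are isolated since no type \ref{type d} edge is incident to any $y_j$. For $i=\ell+2$, the type \ref{type e} edges are those of type \ref{type d} plus the edges $y_ju_j$, which attach each previously-isolated $y_j$ to the rest, so $G_{\ell+2}$ is connected. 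The main obstacle, and the only place real care is needed, is the first case: proving that the big component $V(G)\setminus\{C_i,L_{i1},L_{i2},L_{i3}\}$ is genuinely connected and that every $y_j$ and every clause vertex $C_{i'}$ lands in it — this is where the hypotheses that clauses are distinct, that a variable and its negation never share a clause, and that each variable occurs in at least one clause all get used, and I would organize that argument around the hub vertex $z$.
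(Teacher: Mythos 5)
Your proposal is correct and follows essentially the same route as the paper's proof: both handle the three color ranges separately, organize the large component of each $G_i$ around the hub vertex $z$, and invoke clause distinctness (for the $C_{i'}$) and the no-variable-with-its-negation condition (for the $y_j$) in exactly the same places. The only cosmetic difference is in showing the two $c_i$-components are disjoint — you check that no edge of any type crosses between the sets, while the paper observes that each literal of $C_i$ has $c_i$-degree one — but these amount to the same verification.
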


\begin{proof}
    For color $c_{\ell+2}$, each vertex is in the same component as $z$, so $G_{\ell+2}$ is connected. For color $c_{\ell+1}$, each vertex $y_j$ ($1\leq j\leq k$) is isolated since there are no edges of color $c_{\ell+1}$ incident to $y_j$ in $G$. The remaining vertices of $G$ are in the same component as $z$ due to the edges of type \ref{type d}.

    Now consider color $c_i$ with $1\leq i\leq \ell$.
    We will show the subgraph $G_i$ has exactly two components: $G_{i,C_i}$ containing vertex $C_i$ and $G_{i,z}$ containing vertex $z$.
    Let $U_i=\{L_{i1},L_{i2},L_{i3}\}$, the set of literals appearing in clause $C_i$.   We know $U_i\subseteq G_{i,C_i}$ due to the edges of type \ref{type a}, and $U-U_i\subseteq G_{i,z}$ due to the edges of type \ref{type b}. Since a literal and its negation do not appear in the same clause, each vertex $y_j$ ($1\leq j\leq k$) has an edge of color $c_i$ to $x_j$ or $\ol{x}_j$ (possibly both). Hence $\{y_1,y_2,\ldots,y_k\}\subseteq G_{i,z}$. 
    Now consider vertex $C_{i'}$ with $i'\neq i$. Since the clauses $C_i$ and $C_{i'}$ are distinct, there exists $u\in U_{i'}-U_i\subseteq U-U_i$. Then by \ref{type c} and \ref{type b} we have edges $C_{i'}u$ and $zu$ in color $c_i$, so $C_{i'}\in G_{i,z}$.
    Finally, the vertices $C_i$ and $z$ are not in the same $c_i$-component because if $u\in U_i$, then $u$ is incident to only one edge of color $c_i$, namely the edge $C_iu$. 
\end{proof}

\begin{example}\label{ex:construction}
    Let $F$ be the conjunction of the three clauses 
       \[
        C_1 = x_1\lor \ol{x}_2\lor x_4,\quad
        C_2 = \ol x_1\lor x_3 \lor \ol x_4, \text{ and } 
        C_3 = \ol x_2 \lor x_3 \lor x_4
    \]
    over a set of four Boolean variables $X=\{x_1,x_2,x_3,x_4\}$.
Here, $\ell=3$ and $k=4$, so following \cref{gadgetgraph}, we add to our graph $\mathcal{G}(F)$ the vertices \[C_1, C_2, C_3, x_1, x_2, x_3, x_4, \ol x_1, \ol x_2, \ol x_3, \ol x_4, y_1, y_2, y_3, y_4, z.\]
We next add colored edges to the graph.  
We have edge colors $c_1,c_2,c_3$ corresponding to the three clauses and two additional edge colors $c_4$ and $c_5$.  

We begin with the edges of color $c_1$, illustrated in \cref{fig:crazygraph1}. Our edges of type \ref{type a} are $C_1x_1$, $C_1\ol x_2$, $C_1x_4$.  Our edges of type \ref{type b} are $y_1\ol x_1$, $y_2x_2$, $y_3x_3$, $y_3\ol x_3$, $y_4\ol x_4$ and  $z\ol x_1$, $zx_2$, $zx_3$, $z\ol x_3$, $zx_4$.  Lastly, we add in edges of type \ref{type c}. We have $U_2-U_1=\{\ol x_1,x_3,\ol x_4\}$ and $U_3-U_1=\{x_3\}$, so we add edges $C_2\ol{x}_1$, $C_2x_3$, $C_2\ol{x}_4$ and edge $C_3x_3$ in color $c_1$. 

Similarly, we add all the edges of color $c_2$.  Our edges of type \ref{type a} are $C_2\ol x_1$, $C_2x_3$, $C_2\ol x_4$.  Our edges of type \ref{type b} are $y_1x_1$, $y_2x_2$, $y_2\ol x_2$, $y_3\ol x_3$, $y_4x_4$ and $zx_1$, $zx_2$, $z\ol x_2$, $z\ol x_3$, $zx_4$.  Lastly, we add in edges of type \ref{type c}.  We have $U_1-U_2=\{x_1, \ol x_2, x_4\}$ and $U_3-U_2=\{\ol x_2, x_4\}$, so we add edges $C_1x_1$, $C_1\ol x_2$, $C_1x_4$ and edges $C_3\ol x_2$, $C_3x_4$ in color $c_2$.

We now add all the edges of color $c_3$. Our edges of type \ref{type a} are $C_3\ol x_2$, $C_3x_3$, $C_3x_4$.  Our edges of type \ref{type b} are $y_1 x_1$, $y_1 \ol x_1$, $y_2 x_2$, $y_3 \ol x_3$, $y_4 \ol x_4$ and  $z x_1$, $z \ol x_1$, $z x_2$, $z \ol x_3$, $z\ol x_4$.  Next, we add in edges of type \ref{type c}. We have $U_1-U_3=\{x_1\}$ and $U_2-U_3=\{\ol x_1, \ol x_4\}$, so we add the edge $C_1 x_1$ and edges $C_2 \ol x_1$, $C_2\ol{x}_4$ in color $c_3$.

Next, we add edges in the extra color $c_{4}$. 
The edges of type \ref{type d} are
\[C_1x_1, C_1\ol{x}_2, C_1x_4, 
C_2\ol{x}_1,C_2x_3,C_2\ol{x}_4,
C_3\ol{x}_2, C_3x_3, C_3x_4,\]
and 
\[zx_1,z\ol{x}_1,zx_2,z\ol{x}_2,zx_3,z\ol{x_3},zx_4,z\ol{x}_4.\]
Finally, the edges of type \ref{type e} in the extra color $c_5$ are the same as those of type \ref{type d} along with
\[
x_1y_1, y_1\ol{x}_1, x_2y_2, y_2\ol{x}_2, x_3y_3, y_3\ol{x}_3, x_4y_4, y_4\ol{x}_4.
\]
The underlying simple graph in \cref{fig:crazygraph} summarizes all the edges in $\mathcal{G}(F)$, without indication of colors and parallel edges.

\end{example}
\begin{figure}
    \centering
    \begin{tikzpicture}
        \def \radius {8pt};
        \def \xone {(1,0)};
        \def \yone {(2,0)};
        \def \nxone {(3,0)};
        \def \xtwo {(5,0)};
        \def \ytwo {(6,0)};
        \def \nxtwo {(7,0)};
        \def \xthr {(9,0)};
        \def \ythr {(10,0)};
        \def \nxthr {(11,0)};
        \def \xfour {(13,0)};
        \def \yfour {(14,0)};
        \def \nxfour {(15,0)};
        \def \Cone {(4,2.75)};
        \def \Ctwo {(8,2.75)};
        \def \Cthr {(12,2.75)};
        \def \root {(8,-2.75)};
        \draw[blue,thick] \yone--\nxone;
        \draw[blue,thick] \xtwo--\ytwo;
        \draw[blue,thick] \xthr--\ythr;
        \draw[blue,thick] \nxthr--\ythr;
        \draw[blue,thick] \nxfour--\yfour;
        % clause gadgets
        \foreach \pt in {\xone,\nxtwo,\xfour}
        \draw[blue,ultra thick] \Cone -- \pt;
        \foreach \pt in {\nxone,\xthr,\nxfour}
        \draw[blue,thick] \Ctwo -- \pt;
        \foreach \pt in {\xthr}
        \draw[blue,thick] \Cthr -- \pt;
        %root
        \foreach \x in {3,5}
        \draw[bend right,blue,thick] (\x,0) to (8,-2.75);
        \foreach \x in {1,5,7}
        \draw[bend left,blue,thick] (16-\x,0) to (8,-2.75);
        % vertices
        \foreach \pt/\name in {\xone/{x_1},\yone/{y_1},\nxone/{\ol{x}_1},\xtwo/{x_2},\ytwo/{y_2},\nxtwo/{\ol{x}_2},\xthr/{x_3},\ythr/{y_3},\nxthr/{\ol{x}_3},\xfour/{x_4},\yfour/{y_4},\nxfour/{\ol{x}_4},\root/{z},\Cone/{C_1},\Ctwo/{C_2},\Cthr/{C_3}}
        \draw[fill=white] \pt circle[radius=\radius] node {\footnotesize ${\name}$};
    \end{tikzpicture}
    \caption{The two components in color $c_1$ for the graph $\mathcal{G}(F)$ in \cref{ex:construction}. The first connected component of $G_{1}$ comprises the thicker edges, while the thinner edges compose the second connected component.}
    \label{fig:crazygraph1}
\end{figure}
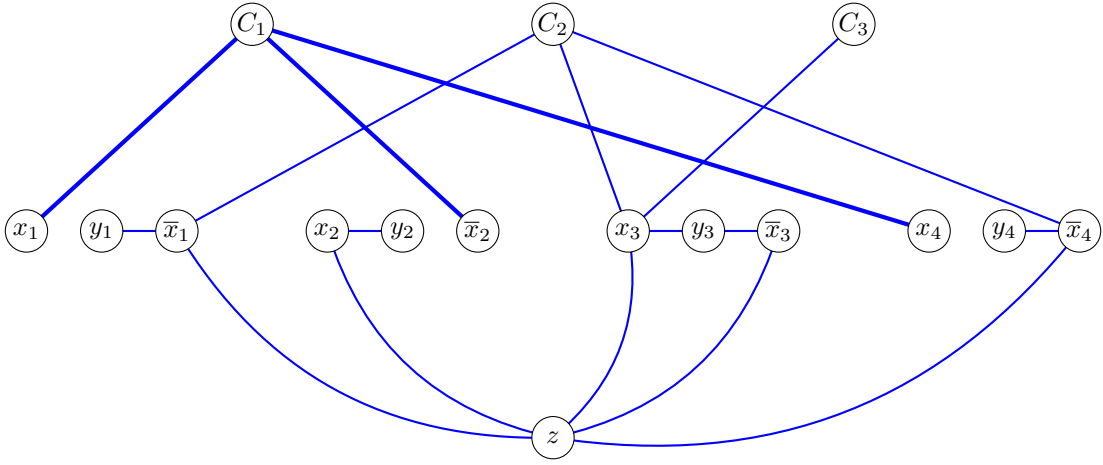

\begin{figure}
    \centering
    \begin{tikzpicture}
        \def \radius {8pt};
        \def \xone {(1,0)};
        \def \yone {(2,0)};
        \def \nxone {(3,0)};
        \def \xtwo {(5,0)};
        \def \ytwo {(6,0)};
        \def \nxtwo {(7,0)};
        \def \xthr {(9,0)};
        \def \ythr {(10,0)};
        \def \nxthr {(11,0)};
        \def \xfour {(13,0)};
        \def \yfour {(14,0)};
        \def \nxfour {(15,0)};
        \def \Cone {(4,2.75)};
        \def \Ctwo {(8,2.75)};
        \def \Cthr {(12,2.75)};
        \def \root {(8,-2.75)};

        \draw \xone -- \yone -- \nxone;
        \draw \xtwo -- \ytwo -- \nxtwo;
        \draw \xthr -- \ythr -- \nxthr;
        \draw \xfour -- \yfour --\nxfour;
        % clause gadgets
        \foreach \pt in {\xone,\nxtwo,\xfour}
        \draw \Cone -- \pt;
        \foreach \pt in {\nxone,\xthr,\nxfour}
        \draw \Ctwo -- \pt;
        \foreach \pt in {\nxtwo,\xthr,\xfour}
        \draw \Cthr -- \pt;
        % root
        \foreach \x in {1,3,5,7}
        {\draw[bend right] (\x,0) to (8,-2.75);
        \draw[bend left] (16-\x,0) to (8,-2.75);}
        % vertices
        \foreach \pt/\name in {\xone/{x_1},\yone/{y_1},\nxone/{\ol{x}_1},\xtwo/{x_2},\ytwo/{y_2},\nxtwo/{\ol{x}_2},\xthr/{x_3},\ythr/{y_3},\nxthr/{\ol{x}_3},\xfour/{x_4},\yfour/{y_4},\nxfour/{\ol{x}_4},\root/{z},\Cone/{C_1},\Ctwo/{C_2},\Cthr/{C_3}}
        \draw[fill=white] \pt circle[radius=\radius] node {\footnotesize ${\name}$};
    \end{tikzpicture}
    \caption{Underlying simple graph corresponding to statement $F=C_1\land C_2\land C_3$ with clauses $C_1=x_1\lor \ol{x}_2\lor x_4$, $C_2=\ol{x}_1\lor x_3\lor\ol{x}_4$, and $C_3=\ol{x}_2\lor x_3\lor x_4$.}
    \label{fig:crazygraph}
\end{figure}

The next lemma tells us what edges are necessary to connect the components of $\mathcal{G}(F)$ in the extra color $c_{\ell+1}$.

\begin{lemma}\label{pair}
    If $F$ is a statement in $S(\ell,k)$ and $W$ is a color-connecting wild set for the graph $\mathcal{G}(F)$, then for all $1\leq j\leq k$, the set $W$ includes at least one edge from the pair $\{y_jx_j,y_j\overline{x_j}\}$.
\end{lemma}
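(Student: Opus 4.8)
The plan is to use the component structure of the extra color $c_{\ell+1}$ established in \cref{crazygraphcomponents} together with a short inspection of which edges of $\mathcal{G}(F)$ are incident to the vertex $y_j$. By \cref{crazygraphcomponents}, in the monochromatic subgraph $G_{\ell+1}$ the vertex $y_j$ forms its own singleton component $\{y_j\}$, so $y_j$ is incident to no edge of color $c_{\ell+1}$. Since $W$ is a color-connecting wild set, $G_{\ell+1}^W$ is connected, and because $\mathcal{G}(F)$ has more than one vertex, $y_j$ cannot remain isolated in $G_{\ell+1}^W$. Hence $W$ must contain at least one edge of $\mathcal{G}(F)$ incident to $y_j$.

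The second step is to identify the neighbors of $y_j$ in the underlying multigraph $\mathcal{G}(F)$. Scanning the edge types in \cref{gadgetgraph}: edges of types \ref{type a}, \ref{type c}, and \ref{type d} join a clause-vertex or $z$ to a literal-vertex and never touch a vertex $y_j$; the edges of types \ref{type b} and \ref{type e} that are incident to $y_j$ are precisely those of the form $y_j u_j$ with $u_j\in\{x_j,\overline{x_j}\}$. (Note that $y_jx_j$ and $y_j\overline{x_j}$ are both honest edges of $\mathcal{G}(F)$, being added in color $c_{\ell+2}$ by step \ref{type e}, so the pair in the statement is nonempty.) Therefore every edge of $\mathcal{G}(F)$ incident to $y_j$ lies in $\{y_jx_j,\,y_j\overline{x_j}\}$. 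Combining with the previous step, $W\cap\{y_jx_j,\,y_j\overline{x_j}\}\neq\varnothing$, as claimed.

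I do not anticipate a genuine obstacle: the result is essentially immediate once \cref{crazygraphcomponents} is in hand. The only point requiring care is the exhaustive check that no edge of any type other than \ref{type b} or \ref{type e} has $y_j$ as an endpoint, and that the presence of parallel edges in several colors does not create additional neighbors of $y_j$ — in every color, the only edges at $y_j$ still go to $x_j$ or $\overline{x_j}$.
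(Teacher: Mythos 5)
Your proof is correct and follows essentially the same route as the paper: both use the fact from \cref{crazygraphcomponents} that $y_j$ is isolated in $G_{\ell+1}$ to force $W$ to contain an edge incident to $y_j$, and then observe that the only such edges are $y_jx_j$ and $y_j\overline{x_j}$. Your explicit check that no edge type other than \ref{type b} or \ref{type e} touches $y_j$ is a detail the paper leaves implicit, but it is the same argument.
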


\begin{proof}
    Let $G=\mathcal{G}(F)$. Let $1\leq j\leq k$ and let $W$ be a color-connecting wild set for $G$. Since $y_j$ is an isolated vertex in $G_{\ell+1}$, the only way for $G_{\ell+1}^W$ to be connected is if $W$ includes an edge incident to $y_j$ in $G$, i.e., $W$ includes an edge from the pair $\{y_jx_j, y_j\ol{x}_j\}$. 
\end{proof}

Now, we determine the edges to connect the components of $\mathcal{G}(F)$ in the first $\ell$ colors.

\begin{lemma}\label{GiW-connected}
    Let $G=\mathcal{G}(F)$, where $F$ is a statement in $S(\ell,k)$.
    Given an assignment of truth values to the variables $x_1,x_2,\ldots,x_k$, let $u_j$ be the element of $\{x_j,\ol{x}_j\}$ that is true and let $W=\{y_ju_j:1\leq j\leq k\}\cap E_{\ell+2}$. For $1\leq i\leq \ell$, the graph $G_i^W$ is connected if and only if clause $C_i$ is true.
\end{lemma}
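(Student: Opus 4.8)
The plan is to analyze the connected components of $G_i^W$ directly using the component description from \cref{crazygraphcomponents}. Recall that $G_i$ has exactly two components: the ``clause side'' $\{C_i, L_{i1}, L_{i2}, L_{i3}\}$ and the ``root side'' $V(G)-\{C_i,L_{i1},L_{i2},L_{i3}\}$ containing $z$. Since adding $W$ only adds edges (and $W$ changes nothing within either side), $G_i^W$ is connected if and only if some edge of $W$ joins the clause side to the root side. So the whole proof reduces to: \emph{$W$ contains an edge with exactly one endpoint in $\{C_i,L_{i1},L_{i2},L_{i3}\}$ if and only if $C_i$ is true.}

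First I would identify which edges of $W$ could possibly be such a crossing edge. Every edge in $W$ has the form $y_j u_j$ where $u_j\in\{x_j,\ol x_j\}$ is the true literal for variable $j$, and $y_j u_j$ lies in $E_{\ell+2}$. The vertex $y_j$ is never on the clause side (the clause side contains only $C_i$ and literal-vertices), so $y_j u_j$ crosses the cut precisely when $u_j\in\{L_{i1},L_{i2},L_{i3}\}$, i.e., when the true literal of variable $j$ is one of the literals appearing in clause $C_i$. Thus $G_i^W$ is connected iff there is some $j$ with $u_j\in\{L_{i1},L_{i2},L_{i3}\}$, which says exactly that at least one literal of $C_i$ is satisfied by the truth assignment — that is, $C_i$ is true. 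The forward and reverse directions are then both immediate from this equivalence.

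The one technical point to check carefully — and the likely ``main obstacle,'' though it is minor — is that $W$ is well-defined and lies inside $E_{\ell+2}$ as claimed, and that intersecting with $E_{\ell+2}$ does not accidentally discard an edge we need. By construction \ref{type e}, every edge $y_j u_j$ with $u_j\in\{x_j,\ol x_j\}$ is present in color $c_{\ell+2}$, so $\{y_j u_j : 1\le j\le k\}\subseteq E_{\ell+2}$ already and the intersection is a no-op; hence $|W|=k$. I would also note explicitly that $y_j u_j$ genuinely connects the two components of $G_i$ when $u_j$ is a literal of $C_i$: the literal-vertex $u_j$ lies on the clause side of $G_i$ (edges of type \ref{type a}) while $y_j$ lies on the root side (it has a $c_i$-edge to $x_j$ or $\ol x_j$, as argued in \cref{crazygraphcomponents}), so the added wild edge $y_j u_j$ bridges them and no other component issues remain since $G_i$ had only two components to begin with. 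With these observations in place the equivalence ``$G_i^W$ connected $\iff$ $C_i$ true'' follows directly, completing the proof.
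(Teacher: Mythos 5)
Your proposal is correct and follows essentially the same route as the paper's proof: both invoke \cref{crazygraphcomponents} to reduce connectivity of $G_i^W$ to whether some wild edge $y_ju_j$ bridges the two $c_i$-components, observe that $y_j$ always lies on the $z$-side while the literal vertices on the $C_i$-side are exactly the literals of clause $C_i$, and conclude the equivalence with $C_i$ being true. Your extra remark that the intersection with $E_{\ell+2}$ discards nothing (by edges of type \ref{type e}) is a harmless elaboration the paper leaves implicit.
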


\begin{proof}
    Let $1\leq i\leq \ell$. Recall from \cref{crazygraphcomponents} that the two components of $G_i$ are the component $G_{i,C_i}$ and $G_{i,z}$ and that $y_j\in G_{i,z}$ for all $1\leq j\leq k$. Thus $G_i^W$ is connected if and only if there is some true literal $u_j$ in the component $G_{i,C_i}$. The literals in component $G_{i,C_i}$ are precisely the literals appearing in clause $C_i$. It follows that $G_i^W$ is connected if and only if clause $C_i$ is true. 
\end{proof}

Finally, we can relate the wild number of the graph $\mathcal{G}(F)$ to satisfiability of the statement $F$.

\begin{theorem}\label{SATiffWILD}
    For every statement $F\in S(\ell,k)$, there exists an edge-colored multigraph $G$ with $\ell+2$ colors such that statement $F$ is satisfiable if and only if the multigraph $G$ has wild number $k$.
\end{theorem}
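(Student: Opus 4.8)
The plan is to take the multigraph $G=\mathcal{G}(F)$ produced by \cref{gadgetgraph} and prove the two implications ``$F$ satisfiable $\Rightarrow \wild(G)\le k$'' and ``$\wild(G)\le k \Rightarrow F$ satisfiable'' separately, then combine them with a lower-bound argument showing $\wild(G)\ge k$ always holds. For the lower bound, \cref{pair} says any color-connecting wild set $W$ must contain at least one of $\{y_jx_j,y_j\ol x_j\}$ for each $j$, and these $k$ pairs are pairwise disjoint edge sets, so $|W|\ge k$; hence $\wild(G)\ge k$ unconditionally. This reduces the theorem to: $F$ is satisfiable if and only if $\wild(G)\le k$, equivalently (given the lower bound) if and only if there is a color-connecting wild set of size exactly $k$.

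For the forward direction, suppose $F$ is satisfiable; fix a satisfying truth assignment and let $u_j\in\{x_j,\ol x_j\}$ be the true literal. I would take $W=\{y_ju_j:1\le j\le k\}$, choosing the copy of each edge that lies in color $c_{\ell+2}$ (as in \cref{GiW-connected}); note $|W|=k$. Now I must check $G_i^W$ is connected for every color $c_i$. For $1\le i\le\ell$ this is exactly \cref{GiW-connected} together with the fact that each clause $C_i$ is true under the assignment. For $c_{\ell+1}$: by \cref{crazygraphcomponents} the components are the singletons $\{y_j\}$ and the big component containing $z$; since each edge $y_ju_j\in W$ joins $y_j$ to $u_j$, and $u_j$ (being a literal vertex) lies in the $z$-component of $G_{\ell+1}$, adding $W$ connects every $y_j$ to $z$, so $G_{\ell+1}^W$ is connected. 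For $c_{\ell+2}$: $G_{\ell+2}$ is already connected by \cref{crazygraphcomponents}, so $G_{\ell+2}^W$ is too. Hence $W$ is a color-connecting wild set of size $k$, giving $\wild(G)\le k$, and with the lower bound, $\wild(G)=k$.

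For the reverse direction, suppose $\wild(G)=k$ (more precisely, there is a color-connecting wild set $W$ with $|W|\le k$; combined with the lower bound, $|W|=k$). By \cref{pair}, $W$ contains at least one edge from each of the $k$ disjoint pairs $\{y_jx_j,y_j\ol x_j\}$; since $|W|=k$, it contains \emph{exactly} one edge from each pair and \emph{no other edges}. So $W=\{y_ju_j:1\le j\le k\}$ for some choice of $u_j\in\{x_j,\ol x_j\}$. Define a truth assignment by setting $u_j$ true (and its negation false) for each $j$; this is well defined precisely because $W$ picks exactly one literal per variable. I claim this assignment satisfies $F$: fix a clause $C_i$. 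Since $W$ is color-connecting, $G_i^W$ is connected; the edges of $W$ that can possibly merge the two components $G_{i,C_i}$ and $G_{i,z}$ of $G_i$ are those $y_ju_j$ with $y_j$ in one component and $u_j$ in the other. By \cref{crazygraphcomponents} every $y_j$ lies in $G_{i,z}$, so connectivity forces some $u_j$ to lie in $G_{i,C_i}$, i.e., some selected literal $u_j$ appears in clause $C_i$ — making $C_i$ true. As $i$ was arbitrary, every clause is true and $F$ is satisfiable.

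The main obstacle is bookkeeping rather than conceptual depth: one must be careful that the $k$ pairs $\{y_jx_j,y_j\ol x_j\}$ really are edge-disjoint in the multigraph (they are, since distinct $j$'s involve distinct $y_j$ vertices), and that a size-$k$ color-connecting set therefore consists of \emph{nothing but} one edge per pair — this exact-count squeeze is what forces the recovered $u_j$'s to define a consistent truth assignment and is the crux of the reverse direction. A secondary point worth stating explicitly is polynomial-time constructibility of $\mathcal{G}(F)$: the graph has $O(\ell+k)$ vertices and $O(\ell k)$ edges, each determined by a simple membership check on clauses, so \cref{gadgetgraph} runs in polynomial time; together with $k$-WILD $\in$ NP (a candidate wild set of size $\le k$ is a polynomial-size certificate checkable in polynomial time), this reduction from $3$-SAT establishes that $k$-WILD is NP-complete.
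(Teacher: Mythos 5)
Your proposal is correct and follows essentially the same route as the paper: the same gadget $\mathcal{G}(F)$, the same wild set $W=\{y_ju_j\}$ built from a satisfying assignment, and the same exact-count squeeze via \cref{pair} to recover a consistent assignment from an ideal wild set. The only cosmetic differences are that you obtain the lower bound $\wild(G)\ge k$ from the $k$ disjoint pairs at the $y_j$'s rather than from the $k+1$ components of $G_{\ell+1}$ (the same underlying fact), and in the reverse direction you re-derive the content of \cref{GiW-connected} directly from \cref{crazygraphcomponents} instead of first normalizing $W\subseteq E_{\ell+2}$ by swapping in parallel $c_{\ell+2}$ edges as the paper does.
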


\begin{proof}
    Let $F\in S(\ell,k)$ be a statement with $\ell$ clauses  $C_1,C_2,\ldots,C_{\ell}$ over a set of variables $X=\{x_1,x_2,\ldots,x_k\}$. Let $G=\mathcal{G}(F)$ as in \cref{gadgetgraph}.
    
    Suppose $F$ is satisfiable 
    and consider an assignment of truth values to the variables $x_1,x_2,\ldots,x_k$ so that each clause of $F$ is true.
    For $1\leq j\leq k$, let $u_j$ be the element of 
    $\{x_j,\ol{x}_j\}$ that is True. 
    Let $W=\{y_ju_j:1\leq j\leq k\}\cap E_{\ell+2}$. For $1\leq i\leq \ell$, we know clause $C_i$ is true, so by \cref{GiW-connected}, the graph $G_i^W$ is connected. Furthermore, for $i=\ell+1$, we have connected the $c_{\ell+1}$-components $\{y_j\}$ to the $c_{\ell+1}$-component $V-\{y_1,y_2,\ldots,y_k\}$ via the wild edges $y_ju_j$, so $G_{\ell+1}^W$ is connected. And, of course, $G_{\ell+2}^W=G_{\ell+2}$ is connected. 
    
    Next, we need to show $W$ is ideal.
    Note that the set $W$ consists of exactly $k$ edges (since $y_ju_j\in W$ if and only if $y_j\ol{u}_j\notin W$), so $\wild(G)\leq k$. But also, the graph $G$ has $k+1$ components in color $c_{\ell+1}$, so $\wild(G)\geq k$. 
    Hence $\wild(G)=k$.

    Conversely, suppose $G$ has wild number $k$. Let $W$ be an ideal wild set for $G$. Without loss of generality, we can assume $W\subseteq E_{\ell+2}$ (since otherwise we may substitute an edge of $W$ with a parallel edge of color $c_{\ell+2}$). By \cref{pair}, the set $W$ must contain at least one edge from each pair $\{y_jx_j,y_j\overline{x}_j\}$ ($1\leq j\leq k$). In fact, since $|W|=k$, we know $W$ contains \textit{exactly} one edge from each pair. 
    For $1\leq j\leq k$, let $u_j$ be the element in $\{x_j,\ol{x}_j\}$ such that $y_ju_j\in W$, and set $u_j$ to True (and $\ol{u}_j$ to False). 
    Since $W$ is a color-connecting set, we know $G_i^W$ is connected for all $1\leq i\leq \ell$,
    so each clause $C_i$ is True by \cref{GiW-connected}.
    Thus, the statement $F$ is satisfiable.
\end{proof}

\begin{theorem}
    The $k$-WILD problem is NP-complete.
\end{theorem}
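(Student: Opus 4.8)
The plan is to prove the two halves of NP-completeness separately: membership in NP, which is a routine verification argument, and NP-hardness, which is essentially immediate from the reduction already built in \cref{SATiffWILD}.

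For membership in NP, I would use as a certificate a subset $W\subseteq E(G)$ with $|W|\leq k$ and verify in polynomial time that $W$ is a color-connecting set: for each color $c_i$ (there are at most $m$ of them), build the graph $G_i^W=(V(G),W\cup E_i(G))$ and run a linear-time search to check that it is connected. Each check costs $O(n+m)$, so the total verification is polynomial in the size of the instance $(G,\gamma,k)$. Since an ideal wild set is by definition a minimum-size color-connecting set, the existence of a color-connecting set of size at most $k$ is equivalent to the existence of an ideal wild set of size at most $k$ (equivalently, to $\wild(G)\leq k$), so such a $W$ is exactly the certificate the decision problem calls for.

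For NP-hardness, I would reduce from $3$-SAT. First I would observe that $3$-SAT remains NP-complete when restricted to statements $F\in S(\ell,k)$: deleting duplicate literals within a clause, deleting clauses containing a variable together with its negation (tautologies), merging identical clauses, and discarding variables appearing in no clause all preserve satisfiability and bring an arbitrary instance into this normal form (if all clauses are deleted, the instance is trivially satisfiable). Given $F\in S(\ell,k)$, I would form the edge-colored multigraph $G=\mathcal G(F)$ of \cref{gadgetgraph}; it has $\ell+3k+1$ vertices and $O(\ell k)$ edges, it is computable from $F$ in polynomial time, and its coloring by $c_1,\dots,c_{\ell+2}$ is surjective. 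By \cref{SATiffWILD}, $F$ is satisfiable if and only if $\wild(G)=k$; and since $G$ has $k+1$ components in color $c_{\ell+1}$, at least $k$ wild edges are always needed, so $\wild(G)\geq k$ and the condition $\wild(G)=k$ is equivalent to $\wild(G)\leq k$. Hence $F\mapsto (G,k)$ is a polynomial-time reduction with $F$ satisfiable if and only if $(G,k)$ is a yes-instance of $k$-WILD, which proves $k$-WILD is NP-hard. Together with the NP membership above, this gives NP-completeness. The only care required is the bookkeeping — confirming $\mathcal G(F)$ has polynomial size and is output in polynomial time, and converting the exact-value statement of \cref{SATiffWILD} into the ``at most $k$'' form of the decision problem via the trivial lower bound $\wild(G)\geq k$; I do not anticipate any genuine obstacle beyond this.
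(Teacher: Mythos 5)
Your proof is correct and follows essentially the same route as the paper: NP membership by checking connectivity of each $G_i^W$ for a certificate $W$, and NP-hardness via the polynomial-time reduction from $3$-SAT through \cref{gadgetgraph} and \cref{SATiffWILD}. The extra bookkeeping you supply --- normalizing arbitrary $3$-SAT instances into $S(\ell,k)$ and converting the exact-value equivalence $\wild(G)=k$ into the decision form $\wild(G)\leq k$ via the trivial lower bound coming from the $k+1$ components in color $c_{\ell+1}$ --- is sound and, if anything, slightly more careful than the paper's own brief argument.
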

\begin{proof}
    The $k$-WILD problem belongs to NP since given a certificate $W$ listing $k$ wild edges, a verifier can check in polynomial time whether $\kappa(G_i^W)=1$ for each color $c_i$. Additionally, given a statement $F\in S(\ell,k)$, the graph $\mathcal{G}(F)$ can be constructed in polynomial time, so by the reduction from $3$-SAT to $k$-WILD in \cref{SATiffWILD}, we conclude $k$-WILD is NP-hard. Hence $k$-WILD is NP-complete.
\end{proof}

%%%%%%%%%%%%%%%%%%%%%%%%%%%%%%%%%%%%%%%%%%%%%%%%%%%
%%%%%%%%%%%%%%%%%%%%%%%%%%%%%%%%%%%%%%%%%%%%%%%%%
%%%%%%%%%%%%%%%%%%%%%%%%%%%%%%%%%%%%%%%%%%%%%%%%
\section{Bounds}\label{sec:bounds}
We showed in \cref{sec:complexity} that the decision variant of determining the wild number of a graph is NP-hard. Nevertheless, we have various upper and lower bounds that can be used to estimate the wild number of a given edge-colored graph. Additionally, we provide a greedy algorithm that constructs a (not necessarily optimal) color-connecting wild set.

\subsection{Bounds} We begin with the component bounds, which are convenient as they are quickly computed  using the colored components of the graph. 

%%%%%%%%%%%%%%%%%%%%%%%%%%%%%%%%%%%%%%%%%%%%%%%%%%%

\begin{proposition}[Component Bounds]\label{component-bounds}
    If $(G,\gamma)$ is an edge-colored graph with $\ell$ colors, then
    \[\max_{1\leq i\leq \ell}(\kappa(G_i)-1)\leq \wild(G)\leq\sum_{i=1}^{\ell}(\kappa(G_i)-1).\]
\end{proposition}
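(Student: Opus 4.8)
The plan is to prove the two inequalities separately, both by elementary arguments about connected components and the dip number.

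For the lower bound, fix a color $c_i$ achieving the maximum of $\kappa(G_j)-1$. Recall from the discussion preceding the proposition that any color-connecting wild set $W$ must make $G_i^W$ connected, and adding a single edge to a graph decreases its number of connected components by at most one. Hence starting from $G_i$ with $\kappa(G_i)$ components, at least $\kappa(G_i)-1$ edges of $W$ are needed just to connect $G_i$, so $|W|\geq \kappa(G_i)-1$. Since this holds for the maximizing $i$ and for every color-connecting $W$, we get $\wild(G)\geq \max_i(\kappa(G_i)-1)$.

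For the upper bound, the plan is to exhibit an explicit color-connecting wild set of the stated size. I would build it greedily: for each color $c_i$, choose $\kappa(G_i)-1$ edges of $G$ whose addition to $G_i$ connects all of its components — such edges exist because $G$ itself is connected, so one can contract each component of $G_i$ and pick edges of a spanning tree of the resulting contracted graph. Let $W$ be the union over all $i$ of these chosen edge sets; then $G_i^W$ is connected for every $i$ (adding more edges never disconnects a connected graph), so $W$ is color-connecting, and $|W|\leq \sum_{i=1}^{\ell}(\kappa(G_i)-1)$, giving $\wild(G)\leq \sum_{i=1}^{\ell}(\kappa(G_i)-1)$. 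Combining the two inequalities completes the proof.

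I do not expect a serious obstacle here; the only point requiring a little care is justifying that for each color one can find $\kappa(G_i)-1$ edges of $G$ that connect the components of $G_i$, which follows from connectedness of $G$ by passing to the quotient graph obtained by contracting each $c_i$-component to a point and taking a spanning tree. The inequality $\delta(W)\le\sum_{e\in W}\delta(e)$ noted in the text is essentially the same phenomenon (subadditivity of component-merging) and could alternatively be invoked, but the direct spanning-tree-of-the-contraction argument is cleanest.
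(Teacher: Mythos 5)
Your proof is correct and follows essentially the same approach as the paper: the lower bound comes from the fact that connecting $\kappa(G_i)$ components requires at least $\kappa(G_i)-1$ added edges, and the upper bound comes from taking the union over all colors of a $(\kappa(G_i)-1)$-edge connecting set for each color. Your extra justification that such connecting sets exist (by contracting the $c_i$-components and taking a spanning tree, using connectedness of $G$) is a detail the paper leaves implicit, but it is the same argument.
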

\begin{proof}
    If $W$ is an ideal wild set for $G$, then for each color $c_i$, there exists a set $W_i\subseteq W$ such that $W_i$ is a minimum $c_i$-connecting set. Since $|W_i|\leq|W|$ for each $i$, we have $\max_{1\leq i\leq \ell}(\kappa(G_i)-1)\leq \wild(G)$.

    To prove the upper bound, consider that for each color $c_i$, there exists a set $W_i$  with $|W_i|=\kappa(G_i)-1$ so that $G_i^{W_i}$ is connected. Then $W=\bigcup_{i=1}^{\ell} W_i$ is a color-connecting set for $G$, so $\wild(G)\leq|W|\leq\sum_{i=1}^{\ell}(\kappa(G_i)-1)$. 
\end{proof}

The bounds for $\wild(G)$ in the statement of \cref{component-bounds} will be used throughout this section and are given names in the following definition.
\begin{definition}
    The \standout{component lower bound of $(G,\gamma)$}, denoted $\operatorname{clb}(G)$, is given by
    \[ \operatorname{clb}(G) = \max_{1\leq i\leq \ell}(\kappa(G_i)-1). \]
    Similarly, the \standout{component upper bound of $(G,\gamma)$}, denoted $\operatorname{cub}(G)$, is given by
    \[\operatorname{cub}(G) =  \sum_{i=1}^{\ell}(\kappa(G_i)-1).\]
\end{definition}

\begin{example}
    As an example of computing these bounds, we return to the graph in \cref{fig:components}.  Recall that for this graph, $\kappa(G_1)=5, \kappa(G_2)=4$, and $\kappa(G_3)=5$.  Thus $\operatorname{clb}(G)=5-1=4$, while $\operatorname{cub}(G)=4+3+4=11$.
\end{example}

%%%%%%%%%%%%%%%%%%%%%%%%%%%%%%%%%%%%%%%%%%%%%%%%%%
% \subsection{More Bounds}
In addition to these component bounds, we have also developed two other lower bounds for $\wild(G)$, known as the ceiling lower bound of $G$ and the dip lower bound of $G$. The ceiling lower bound is quicker to compute than the dip lower bound, but the dip lower bound is sometimes an improvement since it is always greater than or equal to the ceiling lower bound. 
We begin with the ceiling lower bound. 

\begin{proposition}[Ceiling Lower Bound]\label{ceiling bound}
    If $(G,\gamma)$ is an edge-colored graph with $\ell$ colors, then 
    \[
   \wild(G)\geq\left \lceil\frac{\cub(G)}{\ell-1} \right\rceil=\left\lceil \frac{\sum_{i=1}^{\ell}(\kappa(G_i)-1)}{\ell-1}\right\rceil.
    \]
\end{proposition}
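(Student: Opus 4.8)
The plan is to show that every color-connecting wild set $W$ must be large enough to supply, across all $\ell$ colors simultaneously, the total of $\cub(G)=\sum_{i=1}^\ell(\kappa(G_i)-1)$ connections that are needed. The key observation is a double-counting of how much a single wild edge can contribute. For a fixed edge $e\in E(G)$, if $\gamma(e)=c_j$, then $e$ cannot help color $c_j$: adding a $c_j$-colored edge to the monochromatic subgraph $G_j$ does not change its component structure, so $\kappa(G_j^e)=\kappa(G_j)$. For any other color $c_i$ with $i\neq j$, coloring $e$ wild can reduce the component count by at most one. Hence the dip number of any single edge satisfies $\delta(e)\le \ell-1$.

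From here I would argue as follows. Let $W$ be any color-connecting wild set. Since $G_i^W$ is connected for every $i$, the dip number of the set $W$ is exactly $\delta(W)=\sum_{i=1}^\ell(\kappa(G_i)-\kappa(G_i^W))=\sum_{i=1}^\ell(\kappa(G_i)-1)=\cub(G)$. On the other hand, by the subadditivity noted in the text, $\delta(W)\le\sum_{e\in W}\delta(e)$, and by the bound from the previous paragraph each term is at most $\ell-1$, so $\delta(W)\le (\ell-1)|W|$. Combining, $\cub(G)\le(\ell-1)|W|$, hence $|W|\ge \cub(G)/(\ell-1)$. Since $|W|$ is an integer, $|W|\ge\lceil \cub(G)/(\ell-1)\rceil$. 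Taking $W$ to be an ideal wild set gives $\wild(G)\ge\lceil\cub(G)/(\ell-1)\rceil$, which is the claimed inequality. (One should note $\ell\ge2$ here so the division makes sense; when $\ell=1$ the wild number is $0$ and there is nothing to prove.)

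The argument is short, so there is no serious obstacle, but the step requiring the most care is the bound $\delta(e)\le\ell-1$: one must be explicit that a wild edge never helps its own color, which is exactly why the denominator is $\ell-1$ rather than $\ell$. A secondary point to handle cleanly is the passage from the set identity $\delta(W)=\cub(G)$ — valid precisely because $W$ is color-connecting, so each $\kappa(G_i^W)=1$ — to the inequality via subadditivity; it is worth stating that subadditivity can be strict (a single edge may be ``double-counted'' if it would help the same pair of components in two colors at once), which is exactly the slack that makes the dip lower bound a potential improvement over this ceiling bound, as the surrounding text promises.
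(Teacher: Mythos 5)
Your proposal is correct and follows essentially the same route as the paper: both bound $\delta(W)$ below by $\cub(G)$ (since $W$ color-connecting forces $\kappa(G_i^W)=1$ for all $i$) and above by $(\ell-1)|W|$ (since no edge helps its own color), then divide and take the ceiling. You simply make explicit the per-edge bound $\delta(e)\le\ell-1$ and the subadditivity step that the paper leaves implicit.
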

\begin{proof}
Let $W$ be a color-connecting wild set of $G$. Then \[|W|\cdot(\ell-1)\geq \delta(W) \geq \kappa_{\mathcal{C}}(G)-\ell,\] 
where $\delta(W)$ is the dip number, as defined in \cref{sec:introduction}.  It follows that $|W|\geq\frac{\kappa_{\mathcal{C}}(G)-\ell}{\ell-1}$ and $\wild(G) \geq \frac{\kappa_{\mathcal{C}}(G)-\ell}{\ell-1}$.
\end{proof}

\begin{definition}
    Given an edge-colored graph $G$, the \standout{ceiling lower bound for $G$} is
    \[
   \left \lceil\frac{\cub(G)}{\ell-1} \right\rceil=\left\lceil \frac{\sum_{i=1}^{\ell}(\kappa(G_i)-1)}{\ell-1}\right\rceil.
    \]
\end{definition}

We now move on to the dip lower bound. Recall that the dip number of an edge is the number of colors helped by coloring the edge wild.  Specifically,
$\delta(uv)=\#\{i:\text{edge $uv$ helps color $c_i$}\}$. The \standout{dip sequence} of a graph is the sequence of dip numbers, arranged in non-increasing order, of all the edges in the graph. Dip sequences will be used here in defining the dip lower bound as well as in \cref{greedy}, which produces a color-connecting wild set for a given edge-colored graph.

\begin{proposition}[Dip Lower Bound]\label{thm:DipLowerBound}
    If $(\delta_1,\delta_2,\ldots,\delta_m)$ is the dip sequence of an edge-colored graph $G$, with $\delta_1\geq\delta_2\geq\cdots\geq\delta_m$, then 
    \begin{equation*}
    \wild(G)\geq\min\left\{p:\delta_1+\delta_2+\cdots+\delta_p\geq\sum_{i=1}^{\ell}(\kappa(G_i)-1)\right\}.
    \end{equation*}
\end{proposition}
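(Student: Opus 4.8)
The plan is to show that any color-connecting wild set $W$ must have size at least the quantity on the right-hand side, by comparing the total "help" that $W$ can deliver against the total help that is required. The key inequality I would use is the one already recorded in the excerpt, namely $\delta(W) \le \sum_{e \in W} \delta(e)$, together with the observation that for $W$ to be color-connecting we need $G_i^W$ connected for every $i$, which forces the dip number of $W$ to satisfy $\delta(W) = \sum_{i=1}^{\ell}(\kappa(G_i) - \kappa(G_i^W)) = \sum_{i=1}^{\ell}(\kappa(G_i) - 1) = \cub(G)$. So $W$ must satisfy $\sum_{e \in W} \delta(e) \ge \cub(G)$.

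Next I would exploit the fact that the dip sequence $(\delta_1, \ldots, \delta_m)$ lists the dip numbers of all edges in non-increasing order. For any set $W$ of edges with $|W| = p$, the sum $\sum_{e \in W}\delta(e)$ is at most $\delta_1 + \delta_2 + \cdots + \delta_p$, since the $p$ largest dip numbers in the whole graph dominate the dip numbers of any $p$-element subset. Combining this with the previous paragraph, if $W$ is a color-connecting wild set of size $p$, then $\delta_1 + \cdots + \delta_p \ge \cub(G) = \sum_{i=1}^{\ell}(\kappa(G_i)-1)$. Therefore $p$ is at least the minimum value for which the partial sum of the dip sequence reaches $\cub(G)$, which is exactly the claimed lower bound. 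Applying this to an ideal wild set $W$ gives $\wild(G) = |W| \ge \min\{p : \delta_1 + \cdots + \delta_p \ge \sum_{i=1}^\ell(\kappa(G_i)-1)\}$.

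I do not expect any serious obstacle here; the argument is essentially a packing/counting bound. The one point that needs a little care is justifying $\delta(W) = \cub(G)$: this uses that a color-connecting set makes every $G_i^W$ connected, so $\kappa(G_i^W) = 1$, and hence the telescoping in the definition of $\delta(W)$ collapses to $\sum_i(\kappa(G_i) - 1)$. A second small point is that the minimum in the statement is well-defined — i.e., that some partial sum of the dip sequence does reach $\cub(G)$ — which follows because $\sum_{i=1}^m \delta_i = \sum_{e \in E(G)}\delta(e) \ge \delta(E(G)) = \cub(G)$ (taking $W = E(G)$, which is certainly color-connecting since $G$ is connected), so $p = m$ always works as an upper witness. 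With these two observations in place, the proof is just a two-line chain of inequalities.
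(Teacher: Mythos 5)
Your argument is correct and is essentially identical to the paper's proof: both establish the chain $\delta_1+\cdots+\delta_p \geq \sum_{e\in W}\delta(e) \geq \delta(W) \geq \sum_{i=1}^{\ell}(\kappa(G_i)-1)$ for a color-connecting set $W$ of size $p$ and conclude $p\geq p_0$. Your added remark on the well-definedness of the minimum (witnessed by $W=E(G)$) is a small bonus the paper omits, but the approach is the same.
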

\begin{proof}

Let $p_0=\min\left\{p:\delta_1+\delta_2+\cdots+\delta_p\geq\sum_{i=1}^{\ell}(\kappa(G_i)-1)\right\}$. If $W$ is a color-connecting wild set with $p$ edges $e_{i_1},e_{i_2},\ldots,e_{i_p}$, then
\[\delta_1+\delta_2+\cdots+\delta_p\geq 
\delta_{i_1}+\delta_{i_2}+\cdots+\delta_{i_p}\geq\delta(W)\geq\sum_{i=1}^{\ell}(\kappa(G_i)-1).\]
So $p\geq p_0$ and $\wild(G)\geq p_0$.
\end{proof}

\begin{definition}[Dip Lower Bound] 
Given an edge-colored graph $G$, the \standout{dip lower bound for $G$} is 
\[\operatorname{dlb}(G)=\min\left\{p:\delta_1+\delta_2+\cdots+\delta_p\geq\sum_{i=1}^{\ell}(\kappa(G_i)-1)\right\}.\]
\end{definition}

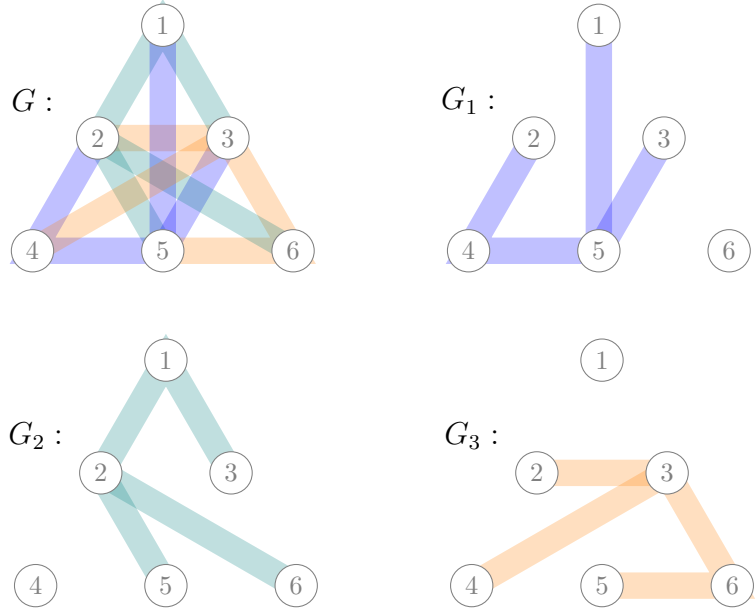
\begin{figure}
    \centering{}
    \begin{tikzpicture}
        \def \radius {8pt};
        \def \va {(30+1*360/3:1)};
        \def \vb {(30+2*360/3:1)};
        \def \vc {(30+3*360/3:1)};
        \def \vd {(90+1*360/3:2)};
        \def \ve {(90+2*360/3:2)};
        \def \vf {(90+3*360/3:2)};
        \def \ga {(90+1*360/6:2)};

        \draw \ga node {$G:$};
        % inner cycle
        \draw[teal,highlight] \va--\vb;
        \draw[blue,highlight] \vb--\vc;
        \draw[orange,highlight] \vc--\va;
        % sunrays
        \draw[blue,highlight] \va--\vd --\vb;
        \draw[orange,highlight] \vb--\ve--\vc;
        \draw[teal,highlight] \vc--\vf--\va;
        % across
        \draw[teal,highlight] \va -- \ve;
        \draw[blue,highlight] \vb -- \vf;
        \draw[orange,highlight] \vc --\vd;
        \foreach \pt/\name in {\va/{2},\vb/{5},\vc/{3},\vd/{4},\ve/{6},\vf/{1}}
         \draw[gray,fill=white] \pt circle[radius=\radius] node {\footnotesize ${\name}$};
    \end{tikzpicture} \hspace{0.5in} \begin{tikzpicture}
        \def \radius {8pt};
        \def \va {(30+1*360/3:1)};
        \def \vb {(30+2*360/3:1)};
        \def \vc {(30+3*360/3:1)};
        \def \vd {(90+1*360/3:2)};
        \def \ve {(90+2*360/3:2)};
        \def \vf {(90+3*360/3:2)};
        \def \gb {(90+1*360/6:2)};

        \draw \gb node {$G_1:$};
        % inner cycle
        \draw[blue,highlight] \vb--\vc;
        % sunrays
        \draw[blue,highlight] \va--\vd --\vb;
        % across
        \draw[blue,highlight] \vb -- \vf;
        % vertices
        \foreach \pt/\name in {\va/{2},\vb/{5},\vc/{3},\vd/{4},\ve/{6},\vf/{1}}
         \draw[gray,fill=white] \pt circle[radius=\radius] node {\footnotesize ${\name}$};
    \end{tikzpicture} \vskip24pt
    \begin{tikzpicture}
        \def \radius {8pt};
        \def \va {(30+1*360/3:1)};
        \def \vb {(30+2*360/3:1)};
        \def \vc {(30+3*360/3:1)};
        \def \vd {(90+1*360/3:2)};
        \def \ve {(90+2*360/3:2)};
        \def \vf {(90+3*360/3:2)};
        \def \gc {(90+1*360/6:2)};

        \draw \gc node {$G_2:$};

        % inner cycle
        \draw[teal,highlight] \va--\vb;
        % sunrays
       \draw[teal,highlight] \vc--\vf--\va;
        % across
       \draw[teal,highlight] \va -- \ve;
        % vertices
        \foreach \pt/\name in {\va/{2},\vb/{5},\vc/{3},\vd/{4},\ve/{6},\vf/{1}}
         \draw[gray,fill=white] \pt circle[radius=\radius] node {\footnotesize ${\name}$};
    \end{tikzpicture} \hspace{0.5in} \begin{tikzpicture}
        \def \radius {8pt};
        \def \va {(30+1*360/3:1)};
        \def \vb {(30+2*360/3:1)};
        \def \vc {(30+3*360/3:1)};
        \def \vd {(90+1*360/3:2)};
        \def \ve {(90+2*360/3:2)};
        \def \vf {(90+3*360/3:2)};
        \def \gc {(90+1*360/6:2)};

        \draw \gc node {$G_3:$};

        % inner cycle
        \draw[orange,highlight] \vc--\va;
        % sunrays
        \draw[orange,highlight] \vb--\ve--\vc;
        % across
        \draw[orange,highlight] \vc --\vd;
        % vertices
        \foreach \pt/\name in {\va/{2},\vb/{5},\vc/{3},\vd/{4},\ve/{6},\vf/{1}}
         \draw[gray,fill=white] \pt circle[radius=\radius] node {\footnotesize ${\name}$};
    \end{tikzpicture}
    \caption{A graph $G$ with component lower bound $1$, ceiling lower bound $2$, and dip lower bound $3$. The wild number is $3$, with ideal wild set $W = \{ v_{1}v_{5}, v_{2}v_{6}, v_{3}v_{4}\}$.    }
    \label{fig:bounds}
\end{figure}

The next example illustrates that the component lower bound, ceiling lower bound, and dip lower bound for a graph can all be different.

\begin{example}\label{ex:small pyramid example}
Consider a graph $G$ with vertex set $V=\{v_1,v_2,v_3,v_4,v_5,v_6\}$. Let $c_1$ be the color purple with edge set $E_1=\{v_1v_5,v_2v_4, v_4v_5, v_5v_3\}$, $c_2$ be the color green with edge set $E_2=\{v_2v_6, v_3v_1, v_1v_2, v_2v_5\}$, and $c_3$ be the color orange with edge set $E_3=\{v_3v_4, v_2v_3, v_3v_6, v_6v_5\}$, as shown in \cref{fig:bounds}.  Then $\kappa(G_1)= \kappa(G_2)= \kappa(G_3)=2$, so $\operatorname{clb}(G)=2-1=1, \operatorname{cub}(G)=1+1+1=3$, and $\kappa_{\mathcal{C}}(G)=6$.
The ceiling lower bound is $\left\lceil\frac{\cub(G)}{\ell-1}\right\rceil=\lceil\frac{3}{2}\rceil=2$. To compute the dip lower bound, we use the non-increasing dip sequence 
$1^{(9)}, 0^{(3)}$.  We see that the minimum $p$ of  \cref{thm:DipLowerBound} is $p=3$, so our dip lower bound for this graph is $3$. In this case, the dip lower bound matches the component upper bound, so we may conclude that $\wild(G)=3$.

\end{example}

\subsection{Algorithm for approximating the wild number}

We next describe an algorithm for finding a
color-connecting set for a given edge-colored graph by selecting wild edges one at a time.
In this iterative process, we contract wild edges as they are selected.

To that end, we introduce the following notation. Given an edge-colored graph $(G,\gamma)$ and subset of wild edges $W$, let $G/W$ be the \standout{quotient graph} obtained by contracting the edges in $W$. We denote the vertex set of $G/W$ as $\{[u]:u\in V(G)\}$, where $[u]=[v]$ if and only if there is a wild path from $u$ to $v$ in $G$. Note that $W$ is a color-connecting set for $G$ if and only if $G/W$ has a spanning tree in every color.

When changing edges to wild, intuition tells us that we would like to prioritize changing edges that help as many colors as possible. When there are multiple edges with the highest dip number, we use what we call the potential  of an edge to choose which of these edges to make wild.   
The \standout{potential} of an edge $e$ in an edge-colored graph $G$ is the nonincreasing dip sequence of the graph $G/{e}$.

\begin{example}[Calculating potential]
    Consider the induced subgraph of \cref{fig:counterex} with vertex set $V=\{v_0,v_1,v_2,v_3\}$, orange edge set $E_1 = \{v_0v_1,v_0v_3\}$, purple edge set $E_2 = \{v_0v_2\}$, green edge set $E_3=\{v_1v_2\}$, and red edge set $E_4=\{v_2v_3\}$.
    We call this graph $H$. To calculate the potential of the edge $v_1v_2$, we consider the subgraph $H/v_1v_2$, in which   edge $v_0v_3$ has dip number $3$ (helps green, purple, red), edge $v_0v_1$ has dip number $2$ (helps green, red), edge $v_0v_2$ has dip number $2$ (helps red, green), edge $v_2v_3$ has dip number $2$ (helps purple, green),  and edge $v_1v_2$ now has dip number $0$ (has already been changed to wild). Thus, the edge $v_1v_2$ has potential $(3,2,2,2,0)$. 

\end{example}

In the greedy algorithm, we choose edges to color wild by considering which edges have the greatest dip number and potential.

\begin{alg}[Greedy]\label{greedy} To construct a color-connecting wild set $W$ for an edge-colored graph $(G,\gamma)$:
Initialize by letting $W=\varnothing$.
While the graph $G/W$ is not color-connected, do the following two steps:
\begin{enumerate}
    \item Order the edges in $G/W$ with the highest dip number lexicographically by potential in $G/W$ (i.e., compare first terms of potential, highest wins; if tie, compare next term; etc.).
    \item Select an edge $[u][v]\in E(G/W)$ with greatest potential, and add the edge $uv\in E(G)$ to the wild set $W$.
\end{enumerate}
When $G/W$ is color-connected, return the set $W$.

\end{alg}

\begin{figure}
    \centering{}
    \begin{tikzpicture} % first graph
        \def \radius {8pt};
        \def \va {(90+1*360/3:1)}; 
        \def \vb {(90+2*360/3:1)};
        \def \vc {(90+3*360/3:1)};
        \def \vd {(90+1*360/3:3)};
        \def \ve {(90+2*360/3:3)};
        \def \vf {(90+3*360/3:3)};
        % inner cycle
        \draw[blue,highlight] \va--\vb;  
        \draw[orange,highlight] \vb--\vc;  
        \draw[blue,highlight] \vc--\va;  
        % outer cycle
        \draw[blue,highlight] \vd--\ve; 
        \draw[orange,highlight] \ve--\vf;  
        \draw[teal,highlight] \vf--\vd;  
        % across
        \draw[teal,highlight] \va -- \vd;  
        \draw[orange,highlight] \vb -- \ve;  
        \draw[teal,highlight] \vc --\vf;  
        % wild edges
        \draw[wild] \vb--\va;
        % vertices
        \foreach \pt/\name in {\va/{6},\vb/{5},\vc/{4},\vd/{3},\ve/{2},\vf/{1}}
         \draw[gray,fill=white] \pt circle[radius=\radius] node {\footnotesize ${\name}$};
    \end{tikzpicture}\hfill
    \begin{tikzpicture} % second graph
        \def \radius {8pt};
        \def \va {(90+1*360/3:1)}; 
        \def \vb {(90+1*360/3:1)};
        \def \vc {(90+3*360/3:1)};
        \def \vd {(90+1*360/3:3)};
        \def \ve {(90+2*360/3:3)};
        \def \vf {(90+3*360/3:3)};
        % inner cycle
        \draw[blue,highlight] \va -- \vb;  
        \draw[orange,highlight] \vb to [bend right=45] (90+3*360/3:1);  
        \draw[blue,highlight] \va to [bend left] (90+3*360/3:1);  
        % outer cycle
        \draw[blue,highlight] \vd--\ve; 
        \draw[orange,highlight] \ve--\vf;  
        \draw[teal,highlight] \vf--\vd;  
        % across
        \draw[teal,highlight] \va -- \vd;  
        \draw[orange,highlight] \vb -- \ve;  
        \draw[teal,highlight] \vc --\vf;  
        % wild edges
        %\draw[wild] \vb--\va;
        \draw[wild] \vf--\ve;
        %\draw[wild] \va--\vd;
        % vertices
        \foreach \pt/\name in {\va/{6},\vb/{5,6},\vc/{4},\vd/{3},\ve/{2},\vf/{1}}
         \draw[gray,fill=white] \pt circle[radius=1.7*\radius] node {\footnotesize ${\name}$};
    \end{tikzpicture}\hfill\vskip24pt
        \begin{tikzpicture} % third graph
        \def \radius {8pt};
        \def \va {(90+1*360/3:1)}; 
        \def \vb {(90+1*360/3:1)};
        \def \vc {(90+3*360/3:1)};
        \def \vd {(90+1*360/3:3)};
        \def \ve {(90+2*360/3:3)};
        \def \vf {(90+2*360/3:3)};
        % inner cycle
        \draw[blue,highlight] \va -- \vb;  
        \draw[orange,highlight] \vb to [bend right=45] (90+3*360/3:1);  
        \draw[blue,highlight] \va to [bend left] (90+3*360/3:1);  
        % outer cycle
        \draw[blue,highlight] \vd --\ve; 
        \draw[orange,highlight] \ve--\vf;  
        \draw[teal,highlight] \vd to [bend right=25] (90+2*360/3:3);  
        % across
        \draw[teal,highlight] \va -- \vd;  
        \draw[orange,highlight] \vb -- \ve;  
        \draw[teal,highlight] \vc --\vf;  
        % wild edges
        \draw[wild] \va--\vd;
        % vertices
        \foreach \pt/\name in {\va/{6},\vb/{5,6},\vc/{4},\vd/{3},\ve/{2},\vf/{1,2}}
         \draw[gray,fill=white] \pt circle[radius=1.7*\radius] node {\footnotesize ${\name}$};
    \end{tikzpicture}\hfill     
    \begin{tikzpicture} % fourth graph
        \def \radius {8pt};
        \def \va {(90+1*360/3:1)}; 
        \def \vb {(90+1*360/3:3)};
        \def \vc {(90+3*360/3:1)};
        \def \vd {(90+1*360/3:1)};
        \def \ve {(90+2*360/3:3)};
        \def \vf {(90+2*360/3:3)};
        % inner cycle
        % \draw[blue,highlight] \va -- \vb;  
        \draw[orange,highlight] \vb -- (90+3*360/3:1);  
        \draw[blue,highlight] (90+1*360/3:3) to [bend left] (90+3*360/3:1);  
        % outer cycle
        \draw[blue,highlight] (90+1*360/3:3) --\ve; 
        \draw[orange,highlight] \ve--\vf;  
        \draw[teal,highlight] (90+1*360/3:3) to [bend right] (90+2*360/3:3);  
        % across
        \draw[teal,highlight] \va -- \vd;  
        \draw[orange,highlight]  \ve to [bend right] (90+1*360/3:3) ;  
        \draw[teal,highlight] \vc --\vf;  
        % vertices
        \foreach \pt/\name in {\vb/{3,5,6},\vc/{4},\vf/{1,2}}
         \draw[gray,fill=white] \pt circle[radius=1.7*\radius] node {\footnotesize ${\name}$};
    \end{tikzpicture}
    \caption{Top left: Prism graph $G$ with dotted wild edge $v_5v_6$. Top right: Graph $G/\{v_5v_6\}$ with dotted wild edge $[v_1][v_2]$. Bottom left: Graph $G/\{v_5v_6,v_1v_2\}$ with dotted wild edge $[v_3][v_6]$. Bottom right: Graph $G/\{v_5v_6,v_1v_2,v_3v_6\}$ is color-connected.}
    \label{fig:prism}
\end{figure}
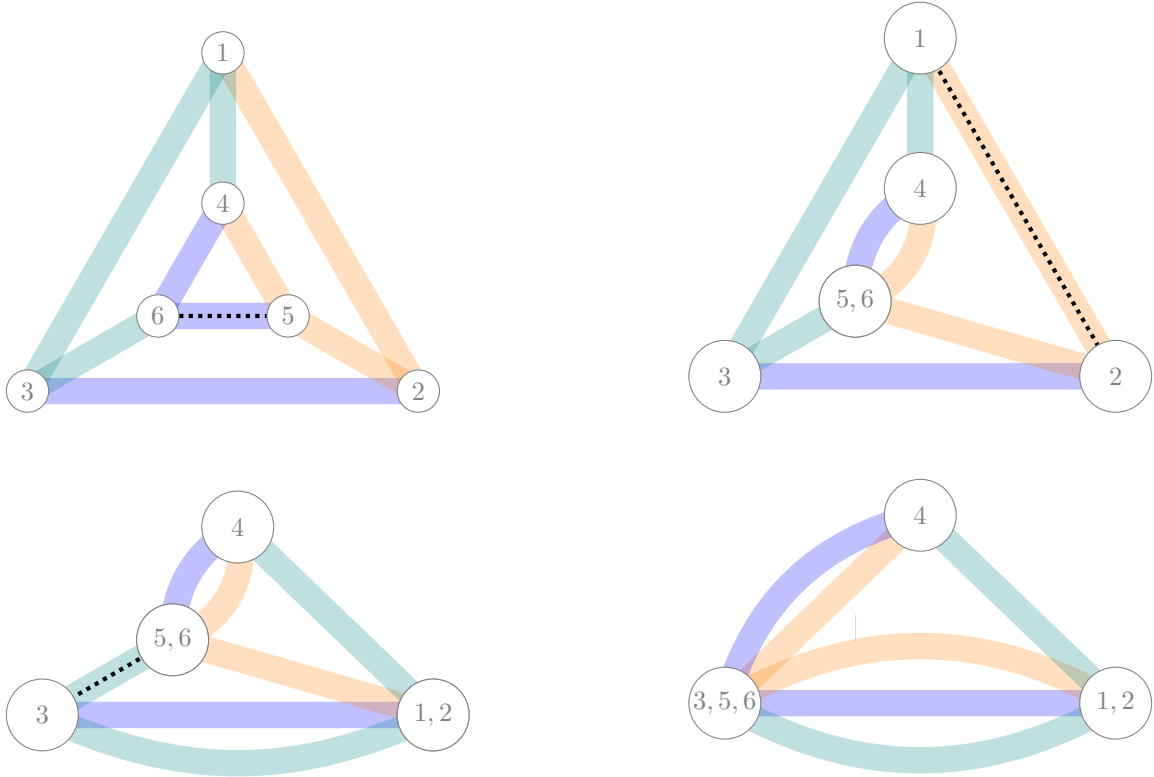

\begin{example}[Greedy Algorithm]\label{ex:greedy}

Consider an edge-colored graph $G$ with vertex set $V=\{v_1,v_2,v_3,v_4,v_5,v_6\}$ and $|\mathcal{C}|=3$.  Let color $c_1$ be green with edge set $E_1=\{v_1v_3,v_1v_4, v_3v_6\}$, color $c_2$ be purple with edge set $E_2=\{v_2v_3, v_4v_6, v_5v_6\}$, and color $c_3$ be orange with edge set $E_3=\{v_1v_2, v_2v_5, v_4v_5\}$, as shown in \cref{fig:prism}.  

To implement the Greedy Algorithm, we begin by computing the dip number of each edge in the graph, and we record those dip numbers in \cref{table:dip numbers for prism}.

    \begin{table}[h]\centering
    {\renewcommand{\arraystretch}{1.25}\begin{tabular}{|l|c|c|c|c|c|c|c|c|c|}
    \hline
    Edge &$v_1v_2$&$v_1v_3$&$v_1v_4$&$v_2v_3$&$v_2v_5$&$v_3v_6$&$v_4v_5$&$v_4v_6$&$v_5v_6$\\
    \hline
Dip Number&2&2&1&2&2&2&1&1&2\\
\hline
\end{tabular}}
\caption{Table of dip numbers for all edges of the graph in \cref{fig:prism}}\label{table:dip numbers for prism}
	\end{table}
Following the algorithm, we compute the potential of each of the edges with maximal dip number, i.e.,  the six edges with dip number $2$.  This is done in \cref{table:potential numbers for prism}.
 The six edges of dip number 2 in the graph in \cref{fig:prism} are listed in the first column of \cref{table:potential numbers for prism}.  Reading along a row of \cref{table:potential numbers for prism}, say row $uv$, each entry records the dip number of that column's edge in the graph $G/uv$.  For example, consider the last row of \cref{table:potential numbers for prism}, the row for edge $v_5v_6$.  Following along this row to the column with header $v_2v_5$, we find the entry $2$.  This tells us that if we change the edge $v_5v_6$ to wild, giving us the graph $G/v_5v_6$, then the dip number of the edge $v_2v_5$ in the graph $G/v_5v_6$ is 2.

    \begin{table}[h]\centering
    {\renewcommand{\arraystretch}{1.25}\begin{tabular}{ccccccccccc}
    Wild Edge&$v_1v_2$&$v_1v_3$&$v_1v_4$&$v_2v_3$&$v_2v_5$&$v_3v_6$&$v_4v_5$&$v_4v_6$&$v_5v_6$ & Potential \\
    \hline
$v_1v_2$ & 0 & 1 & 1 &1 &2 &2 &1 &1 &2 & $2^{(3)},1^{(5)},0^{(1)}$\\
% \hline
$v_1v_3$ & 1 & 0 & 1 & 1 & 2& 2 &1 & 1 & 2 & $2^{(3)},1^{(5)},0^{(1)}$\\
% \hline
$v_2v_3$ & 1 & 1 & 1 & 0 & 2 & 2 &1 &1 &2 & $2^{(3)},1^{(5)},0^{(1)}$\\
% \hline
$v_2v_5$ & 2 & 2 & 1 & 2 &0 &1 &1 &1 &2 & $2^{(4)},1^{(4)},0^{(1)}$\\
% \hline
$v_3v_6$ & 2 & 2 & 1 & 2 &1 &0 & 1 & 1 &2 & $2^{(4)},1^{(4)},0^{(1)}$\\
% \hline
$v_5v_6$ & 2 & 2 & 1 &2 &2 & 2 & 0 &0 &0 & $2^{(5)},1^{(1)},0^{(3)}$\\
\hline
         
\end{tabular}}
\caption{Table computing potentials for all edges with dip number 2 in \cref{fig:prism}}\label{table:potential numbers for prism}
\end{table}

For each edge of dip number 2 in the graph in \cref{fig:prism}, we use the information in that edge's row of \cref{table:potential numbers for prism} to determine the potential of the edge, and this potential is recorded in the last column of \cref{table:potential numbers for prism}.   The edge with greatest potential is $v_5v_6$, so we color this edge wild. 

We now repeat step one with the graph $G/v_5v_6$.   Considering the $5$ edges in $G/v_5v_6$ with dip number $2$, the greatest potential of an edge is $2^{(1)},1^{(4)},0^{(3)}$. 
In fact, the four edges $v_1v_2$, $v_1v_3$, $v_2v_5$, and $v_3v_6$ all share this potential.  According to the Greedy Algorithm, we can choose which of these edges to color wild. We choose $v_1v_2$.

Now we have the graph $G/W$, where $W={\{v_1v_2,v_5v_6\}}$.  Among the edges in this graph, the highest dip number is $2$, and only the edge $v_3v_6$ has dip number $2$.  Thus we color the edge $v_3v_6$ wild. Let $W'=\{v_1v_2, v_3v_6, v_5v_6\}$.  Now $G/W'$ has a spanning tree in each color of $\mathcal{C}$, so $W'$ is a color-connecting wild set of $G$.  

In this case, $W'$ is not only a color-connecting wild set for $G$ but is actually an ideal wild set for $G$, as the ceiling lower bound of $G$ is $\left\lceil\frac{\cub(G)}{\ell-1}\right\rceil=\lceil\frac{6}{2}\rceil=3$, so we see that $\wild(G)=3$.

\end{example}

We will see in the next example, however, that the Greedy Algorithm does not always produce an ideal wild set for a graph $G$.

\begin{example}
    As an example in which the Greedy Algorithm produces a color-connecting wild set that is not an ideal wild set, consider the wheel graph in \cref{fig:counterex}, which has vertex set $V=\{v_i:0\leq i\leq 7\}$ and colored edge sets
    \begin{align*}
        E_1 &=\{v_0v_1,v_0v_3,v_0v_4\}, \\% orange 
        E_2 &=\{v_0v_2,v_0v_6,v_4v_5,v_6v_7\},\\ % purple
        E_3 &= \{v_0v_5,v_0v_7,v_1v_2,v_1v_7,v_5v_6\},\text{ and}\\ % green
        E_4 &= \{v_2v_3,v_3v_4\}. % red
    \end{align*}
    This graph has wild number $5$ with ideal wild set $W=\{v_0v_3,v_0v_7,v_1v_2,v_4v_5,v_5v_6\}$, but \cref{greedy} yields a color-connecting wild set of size $6$, with edges chosen in the following order: $v_1v_7$, $v_2v_3$, $v_4v_5$, $v_5v_6$, $v_6v_7$, $v_0v_4$.
    
\end{example}

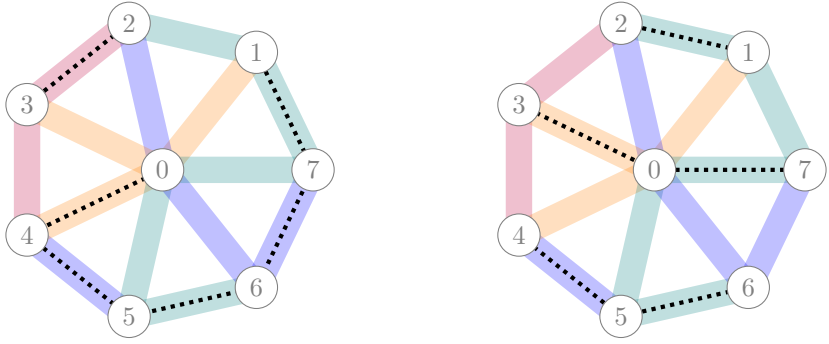
\begin{figure}
    \centering{}\hfill
    \begin{tikzpicture}
        \def \radius {8pt};
        \def \va {(1*360/7:2)};
        \def \vb {(2*360/7:2)};
        \def \vc {(3*360/7:2)};
        \def \vd {(4*360/7:2)};
        \def \ve {(5*360/7:2)};
        \def \vf {(6*360/7:2)};
        \def \vg {(7*360/7:2)};
        \def \vo {(0,0)};
        % cycle
        \draw[teal,highlight] \va--\vb;
        \draw[purple,highlight] \vb--\vc;
        \draw[purple,highlight] \vc--\vd;
        \draw[blue,highlight] \vd--\ve;
        \draw[teal,highlight] \ve--\vf;
        \draw[blue,highlight] \vf--\vg;
        \draw[teal,highlight] \vg--\va;
        % spokes
        \foreach \pt/\edgecolor in {\va/{orange},\vb/{blue},\vc/{orange},\vd/{orange},\ve/{teal},\vf/{blue},\vg/{teal}}
        \draw[\edgecolor,highlight] \pt--\vo;
        % wild edges
        \draw[wild] \vb--\vc;
        \draw[wild] \vo--\vd--\ve--\vf--\vg--\va;
        \foreach \pt/\name in {\va/{1},\vb/{2},\vc/{3},\vd/{4},\ve/{5},\vf/{6},\vg/{7},\vo/{0}}
         \draw[gray,fill=white] \pt circle[radius=\radius] node {\footnotesize ${\name}$};
    \end{tikzpicture}\hfill
    \begin{tikzpicture}
        \def \radius {8pt};
        \def \va {(1*360/7:2)};
        \def \vb {(2*360/7:2)};
        \def \vc {(3*360/7:2)};
        \def \vd {(4*360/7:2)};
        \def \ve {(5*360/7:2)};
        \def \vf {(6*360/7:2)};
        \def \vg {(7*360/7:2)};
        \def \vo {(0,0)};
        % cycle
        \draw[teal,highlight] \va--\vb;
        \draw[purple,highlight] \vb--\vc;
        \draw[purple,highlight] \vc--\vd;
        \draw[blue,highlight] \vd--\ve;
        \draw[teal,highlight] \ve--\vf;
        \draw[blue,highlight] \vf--\vg;
        \draw[teal,highlight] \vg--\va;
        % spokes
        \foreach \pt/\edgecolor in {\va/{orange},\vb/{blue},\vc/{orange},\vd/{orange},\ve/{teal},\vf/{blue},\vg/{teal}}
        \draw[\edgecolor,highlight] \pt--\vo;
        % wild edges
        \draw[wild] \va--\vb;
        \draw[wild] \vc--\vo--\vg;
        \draw[wild] \vd--\ve--\vf;
        \foreach \pt/\name in {\va/{1},\vb/{2},\vc/{3},\vd/{4},\ve/{5},\vf/{6},\vg/{7},\vo/{0}}
         \draw[gray,fill=white] \pt circle[radius=\radius] node {\footnotesize ${\name}$};
    \end{tikzpicture}\hfill{}
    \caption{The greedy algorithm produces a wild set of size six (left), but the wild number is five (right).}
    \label{fig:counterex}
\end{figure}

%%%%%%%%%%%%%%%%%%%%%%%%%%%%%%%%%%%%%%%%%%%%%%%%%%%
%%%%%%%%%%%%%%%%%%%%%%%%%%%%%%%%%%%%%%%%%%%%%%%%%%%
\section{Colors}\label{sec:colors}

In this section, we explore extremal cases of $\ell$, the number of colors used in the edge-coloring of $G$. Additionally, we identify sufficient conditions on the edge-coloring that guarantee the wild number of $G$ will exceed the component lower bound of $G$.
  In \cref{thm:extremal cases of colors} Part \ref{part:ell is 1}, we consider the case where $\ell=1$.  Elsewhere in this section, we assume $\ell\geq 2$. Throughout, we use the notation $\kappa_{\min}=\min_{1\leq i\leq \ell}\kappa(G_i)$ and $\kappa_{\max}=\max_{1\leq i\leq \ell}\kappa(G_i)$, so $\operatorname{clb}(G)=\kappa_{\max}-1$.

\begin{theorem}\label{thm:clb general connections}
Let $(G, \gamma)$ be an edge-colored graph with $n$ vertices, $m$ edges, and $\ell$ colors. 

If $s$ is the number of colors with $\kappa_{\max}$ components and 
$\kappa_{\max}-1>(\ell-s)(\kappa_{\max}-\kappa_{\min})$, then $\wild(G)>\kappa_{\max}-1$. Equivalently, if $\operatorname{clb}(G)>(\ell-s)(\kappa_{\max}-\kappa_{\min})$, then $\wild(G)>\operatorname{clb}(G)$.
\end{theorem}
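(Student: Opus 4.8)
The plan is to argue by contradiction. By the Component Bounds (\cref{component-bounds}) we always have $\wild(G)\ge\operatorname{clb}(G)=\kappa_{\max}-1$, so it suffices to rule out equality; suppose then that $W$ is an ideal wild set with $|W|=\kappa_{\max}-1$. The engine of the argument is the elementary single-color refinement of the inequality $\delta(W)\le\sum_{e\in W}\delta(e)$: for any color $c_i$, adding the edges of $W$ to $G_i$ one at a time merges $c_i$-components one at a time, so $\kappa(G_i)-\kappa(G_i^W)\le\#\{e\in W:e\text{ helps }c_i\}$ (edges of $W$ already colored $c_i$, or otherwise joining vertices of a common $c_i$-component, contribute nothing to the merging). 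I will pair this with one more observation: an edge $e$ can never help its own color, since if $\gamma(e)=c_j$ then $e$ already lies in $G_j$ and $G_j^e=G_j$; hence $\delta(e)\le\ell-1$ for \emph{every} edge $e$ of $G$.

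First I would show that every edge of $W$ helps every color attaining $\kappa_{\max}$ components. Fix such a color $c_i$. Since $W$ is color-connecting, $G_i^W$ is connected, so the inequality above reads $\kappa_{\max}-1\le\#\{e\in W:e\text{ helps }c_i\}\le|W|=\kappa_{\max}-1$, forcing equality; thus every edge of $W$ helps $c_i$.

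Next I would bound, for each of the $\ell-s$ colors $c_i$ with $\kappa(G_i)<\kappa_{\max}$, how many edges of $W$ fail to help $c_i$. The same inequality gives $\kappa(G_i)-1\le\#\{e\in W:e\text{ helps }c_i\}$, hence $\#\{e\in W:e\text{ does not help }c_i\}\le|W|-(\kappa(G_i)-1)=\kappa_{\max}-\kappa(G_i)\le\kappa_{\max}-\kappa_{\min}$. Summing over the $\ell-s$ non-max colors, the number of pairs $(e,c_i)$ with $e\in W$, $c_i$ non-max, and $e$ not helping $c_i$ is at most $(\ell-s)(\kappa_{\max}-\kappa_{\min})$, which by hypothesis is strictly less than $\kappa_{\max}-1=|W|$.

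Finally, pigeonhole finishes it: there are fewer such non-helping pairs than edges of $W$, so some edge $e^*\in W$ helps every non-max color; combined with the first step, $e^*$ helps all $\ell$ colors, i.e.\ $\delta(e^*)=\ell$, contradicting $\delta(e^*)\le\ell-1$. Therefore $\wild(G)\neq\kappa_{\max}-1$, and so $\wild(G)>\kappa_{\max}-1=\operatorname{clb}(G)$. The step I expect to be the crux is recognizing that the two counting bounds (all of $W$ needed for the max colors, little slack left for the non-max colors) are designed to collide precisely through the ``an edge cannot help its own color'' observation; once that is identified, the rest is bookkeeping. A secondary point needing a line of care is the validity of the basic merging inequality when $W$ happens to contain monochromatic edges of the color under consideration, but this case only makes the inequality stronger.
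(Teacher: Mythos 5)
Your proof is correct and rests on exactly the same two ingredients as the paper's: the per-color count that a color-connecting set $W$ must contain at least $\kappa(G_i)-1$ edges helping each color $c_i$, and the bound $\delta(e)\leq\ell-1$ coming from the fact that an edge cannot help its own color. The only difference is presentational --- you rearrange the paper's aggregate inequality $(\ell-1)\lvert W\rvert\geq\delta(W)\geq s(\kappa_{\max}-1)+(\ell-s)(\kappa_{\min}-1)$ into a pigeonhole count of non-helping pairs that exhibits a witness edge $e^*$ with $\delta(e^*)=\ell$, which is the same double count read off the complementary side.
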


\begin{proof}Suppose $(G, \gamma)$ is an edge-colored graph with $n$ vertices, $m$ edges, and $\ell$ colors.
The $s$ colors with $\kappa_{\max}$ components each require $\kappa_{\max}-1=\operatorname{clb}(G)$ connections, while the remaining $\ell-s$ colors that have at least $\kappa_{\min}$ components each need at least $\kappa_{\min}-1$ connections. Thus, any color-connecting wild set of $G$ must have a dip number of at least  $T=s(\kappa_{\max}-1)+(\ell-s)(\kappa_{\min}-1)$. The maximum dip number of an edge in $G$ is $\ell-1$ colors, so if $|W| \leq \kappa_{\max}-1$, then $\delta(W) \leq(\ell-1)(\kappa_{\max}-1)$, leaving at least $T-(\ell-1)(\kappa_{\max}-1)$ remaining connections to be made.
But 
\begin{align*}
   T - (\ell-1)(\kappa_{\max}-1) &= (s-\ell+1)(\kappa_{\max}-1) + (\ell-s)(\kappa_{\min}-1) \\
   &= \kappa_{max}-1 - (\ell-s)(\kappa_{\max}-1)+(\ell-s)(\kappa_{\min}-1)\\
   &=\kappa_{max}-1 - (\ell-s)(\kappa_{\max}-\kappa_{\min}).
\end{align*}
With the assumption that $\kappa_{\max}-1>(\ell-s)(\kappa_{\max}-\kappa_{\min})$, we see that more connections remain. Hence $\wild(G)>\operatorname{clb}(G)$.
\end{proof}

Our first corollary to \cref{thm:clb general connections} addresses the special case where the number of colors with $\kappa_{max}$ components is at least $\ell-1$ and the remaining color has at least two components. The corollary shows that in this case the wild number of $G$ must exceed the component lower bound of $G$.

\begin{corollary}\label{prop:most components geq k+1}
    Let $G$ be an edge-colored graph with $\ell$ colors.
    If $\kappa(G_i)=\kappa_{\max}$ for at least $\ell-1$ colors and $\kappa_{\min}>1$, then $\wild(G)>\operatorname{clb}(G)$.

\end{corollary}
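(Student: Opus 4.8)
The plan is to obtain this directly from \cref{thm:clb general connections} by verifying its hypothesis. As in that theorem, let $s$ denote the number of colors attaining $\kappa_{\max}$ components. The assumption that $\kappa(G_i)=\kappa_{\max}$ for at least $\ell-1$ colors says exactly that $s\geq \ell-1$, hence $\ell-s\in\{0,1\}$.

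With that observation in hand, the next step is to bound the quantity $(\ell-s)(\kappa_{\max}-\kappa_{\min})$ that appears on the right-hand side of the inequality in \cref{thm:clb general connections}. Since $\ell-s\leq 1$ and $\kappa_{\max}-\kappa_{\min}\geq 0$, we get $(\ell-s)(\kappa_{\max}-\kappa_{\min})\leq \kappa_{\max}-\kappa_{\min}$. Rewriting $\kappa_{\max}-\kappa_{\min}=(\kappa_{\max}-1)-(\kappa_{\min}-1)$ and invoking the hypothesis $\kappa_{\min}>1$ (so $\kappa_{\min}-1\geq 1$), this yields $(\ell-s)(\kappa_{\max}-\kappa_{\min})\leq (\kappa_{\max}-1)-(\kappa_{\min}-1)<\kappa_{\max}-1=\operatorname{clb}(G)$. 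When $s=\ell$ the factor $\ell-s$ is $0$, and the strict inequality $0<\kappa_{\max}-1$ still holds because $\kappa_{\max}\geq\kappa_{\min}>1$. In all cases, then, $\operatorname{clb}(G)>(\ell-s)(\kappa_{\max}-\kappa_{\min})$, which is precisely the hypothesis of \cref{thm:clb general connections}, and that theorem gives $\wild(G)>\operatorname{clb}(G)$.

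Since the argument is just a substitution into the already-proved \cref{thm:clb general connections}, there is no substantive obstacle here; the only point requiring care is to check that both boundary situations are handled uniformly — the case $s=\ell-1$, where the needed inequality reduces exactly to $\kappa_{\min}>1$, and the case $s=\ell$, where $\kappa_{\min}=\kappa_{\max}$ and the right-hand side collapses to $0$. The chain of inequalities above covers both at once.
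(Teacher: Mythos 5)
Your proof is correct and follows essentially the same route as the paper: identify $s\geq\ell-1$ so that $\ell-s\leq 1$, use $\kappa_{\min}>1$ to get $\kappa_{\max}-\kappa_{\min}<\kappa_{\max}-1$, and feed the resulting inequality into \cref{thm:clb general connections}. Your explicit check of the $s=\ell$ case is a small touch of extra care that the paper's proof leaves implicit, but the argument is the same.
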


\begin{proof}

Let $s$ be the number of colors with $\kappa_{\max}$ components. Assuming $s\geq \ell-1$, we have $0\leq \ell-s\leq 1$. Also, $\kappa_{\max}-\kappa_{\min}<\kappa_{\max}-1$ since $\kappa_{\min}>1$. Hence
$(\ell-s)(\kappa_{\max}-\kappa_{\min})<\kappa_{\max}-1$. By \cref{thm:clb general connections}, we have $\wild(G)>\operatorname{clb}(G)$.
\end{proof}

We say that a color $c_{i} \in \mathcal{C}$ is {\bf used exactly $t$ times} if $\vert E_{i}(G) \vert = t$, that is, $G$ has exactly $t$ edges with color $c_{i}$. 
Applying \cref{thm:clb general connections} to the special case where $\ell-1$ of the colors are used exactly once and where $G$ does not already have a spanning tree in the one remaining color, we identify a class of graphs for which the wild number is the extremal value $n-1$.

\begin{corollary}[]\label{thm: when n-2>ell-s}
    Suppose $(G, \gamma)$ is a graph with $n$ vertices, $m$ edges, and $s>0$ colors used exactly once in the graph, while each of the other $\ell-s$ colors used in the graph is used exactly twice.  If $n-2>\ell-s$, then $\wild(G)=n-1$.  
\end{corollary}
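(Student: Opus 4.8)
The plan is to reduce the statement to a single application of \cref{thm:clb general connections}, after first computing the component counts of the monochromatic subgraphs. First I would observe that since $\gamma$ is surjective, every color class is nonempty, so $\kappa(G_i)\le n-1$ for all $i$, with equality whenever $G_i$ consists of a single edge or of parallel edges only. In particular each of the $s>0$ colors used exactly once gives $\kappa(G_i)=n-1$, so $\kappa_{\max}=n-1$ and $\operatorname{clb}(G)=n-2$. For a color used exactly twice, the two edges are either parallel (giving $\kappa(G_i)=n-1$) or span three or four distinct vertices with cycle rank $2$ (giving $\kappa(G_i)=n-2$); in all cases $\kappa(G_i)\in\{n-2,n-1\}$, so $\kappa_{\min}\ge n-2$ and hence $\kappa_{\max}-\kappa_{\min}\le 1$.

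Next I would count the colors achieving $\kappa_{\max}$: write $s'$ for that number. Since $s'$ includes all $s$ colors used once, $s'\ge s$, and the $\ell-s'$ colors not achieving $\kappa_{\max}$ lie among the $\ell-s$ colors used twice, so $\ell-s'\le\ell-s$. Then I would split into two short cases. If $\kappa_{\max}=\kappa_{\min}$, then $(\ell-s')(\kappa_{\max}-\kappa_{\min})=0$, and from $\operatorname{clb}(G)=n-2>\ell-s\ge 0$ we get $\operatorname{clb}(G)>0$, so the hypothesis of \cref{thm:clb general connections} in the form $\operatorname{clb}(G)>(\ell-s')(\kappa_{\max}-\kappa_{\min})$ holds. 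If instead $\kappa_{\max}-\kappa_{\min}=1$, then $(\ell-s')(\kappa_{\max}-\kappa_{\min})=\ell-s'\le\ell-s<n-2=\operatorname{clb}(G)$, so again the hypothesis holds. In either case \cref{thm:clb general connections} yields $\wild(G)>\operatorname{clb}(G)=n-2$, i.e.\ $\wild(G)\ge n-1$.

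For the reverse inequality, I would note that the edge set of any spanning tree $T$ of $G$ is a color-connecting wild set: for every color $c_i$, the graph $G_i^{E(T)}$ contains $T$ and hence is spanning and connected. Thus $\wild(G)\le n-1$, and combining with the previous paragraph gives $\wild(G)=n-1$.

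I do not expect a serious obstacle here; the only delicate point is the multigraph subtlety, namely that ``used exactly twice'' permits a pair of parallel edges, for which $\kappa(G_i)=n-1$ rather than $n-2$. This is precisely why I introduce $s'$ rather than working with $s$ directly, and the effect is harmless: it only increases $s'$ and decreases $\ell-s'$, which strengthens the inequality we need. A secondary minor point is justifying $n\ge 3$ in the case $\kappa_{\max}=\kappa_{\min}$, but this is immediate from $n-2>\ell-s\ge 0$.
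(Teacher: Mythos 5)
Your proof is correct and follows essentially the same route as the paper: both reduce the claim to a single application of \cref{thm:clb general connections} with $\kappa_{\max}=n-1$ and $\kappa_{\max}-\kappa_{\min}\le 1$, then cap with the trivial bound $\wild(G)\le n-1$. The paper simply asserts $\kappa_{\min}=n-2$ and that the $\ell-s$ twice-used colors are exactly those missing $\kappa_{\max}$; your extra care with parallel edges, the parameter $s'$, and the degenerate case $\kappa_{\max}=\kappa_{\min}$ tightens details the paper glosses over (only the phrase ``cycle rank $2$'' is a slight misnomer for what is really rank $2$, i.e.\ two independent edges).
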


\begin{proof}
    By applying \cref{thm:clb general connections} with $\kappa_{\max}=n-1$, $\kappa_{\min}=n-2$, and the hypothesis $n-2>\ell-s$, we have $\wild(G)>n-2$. Since $\wild(G)\leq n-1$ always, we get $\wild(G)=n-1$.
\end{proof}

In the next theorem, we summarize some simple results for extremal cases of the number of colors.

\begin{theorem}\label{thm:extremal cases of colors}
    Let $(G,\gamma)$ be an edge-colored simple graph with $n>2$ vertices, $m$ edges, and $\ell$ colors.
    \begin{enumerate}
        \item\label{part:ell is 1} The smallest possible number of colors is $\ell=1$.  In this case, $\wild(G)=0$.
        \item\label{part:ell is 2} If $\ell=2$, then $\wild(G)=\operatorname{cub}(G)=\kappa(G_1)+\kappa(G_2)-2$.
        \item\label{part:ell is m-1} If $\ell=m-1>1$, then
        \[
        \wild(G)=\begin{cases}
            1 & \text{if $n=3$} \\
            n-1 & \text{if $n>3$.}
        \end{cases}
        \]
    \item\label{part:ell is m} The largest possible number of colors is $\ell=m$.  In this case, $\wild(G)=n-1$.
    \end{enumerate}
\end{theorem}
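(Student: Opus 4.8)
The plan is to treat the four parts separately, since each reduces quickly to results already established. Part~\ref{part:ell is 1} is immediate: when $\ell=1$, the only monochromatic subgraph $G_1$ is $G$ itself, which is connected, so $W=\varnothing$ is already color-connecting and $\wild(G)=0$. For Part~\ref{part:ell is 2}, I would sandwich $\wild(G)$ between the component upper bound of \cref{component-bounds}, namely $\wild(G)\le\operatorname{cub}(G)=\kappa(G_1)+\kappa(G_2)-2$, and the ceiling lower bound of \cref{ceiling bound}, namely $\wild(G)\ge\lceil\operatorname{cub}(G)/(\ell-1)\rceil$; when $\ell=2$ the denominator is $1$, so the two bounds coincide and give the claimed equality.

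For Part~\ref{part:ell is m-1}, the first step is a pigeonhole count: since $\gamma$ is surjective and $G$ has $m$ edges and $m-1$ colors, exactly one color is used on two edges and each of the other $m-2$ colors is used on a single edge. Next I would split on $n$. If $n=3$, then connectedness together with $\ell=m-1>1$ forces $m=3$, so $G$ is a triangle with two colors and \cref{prop:cycle wild number} gives $\wild(G)=n-2=1$. If $n>3$, then we are exactly in the hypotheses of \cref{thm: when n-2>ell-s} with $s=m-2>0$ colors used once and $\ell-s=1$ color used twice, and the inequality $n-2>\ell-s$ becomes $n>3$; that corollary then yields $\wild(G)=n-1$.

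For Part~\ref{part:ell is m}, surjectivity together with $\ell=m$ forces $\gamma$ to be a bijection, so every color class is a single edge. Here the general bound $\wild(G)\le n-1$ (a spanning tree of $G$ is a color-connecting set) gives one inequality. For the reverse, note that a color-connecting wild set $W$ cannot be empty, since a single edge cannot span a graph on $n>2$ vertices; picking the color $c_i$ whose unique edge lies in $W$, the subgraph $G_i^W$ has edge set exactly $W$ and must be connected and spanning, so $|W|\ge n-1$. Hence $\wild(G)=n-1$. Alternatively one could invoke \cref{thm:clb general connections} with $s=\ell$ and $\kappa_{\max}=\kappa_{\min}=n-1$, since then $\operatorname{clb}(G)=n-2>0=(\ell-s)(\kappa_{\max}-\kappa_{\min})$.

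Most of this is routine given the earlier machinery; the step needing the most care is Part~\ref{part:ell is m-1}, where one must correctly deduce the color multiplicities from $\ell=m-1$ and separately dispose of the degenerate case $n=3$ before \cref{thm: when n-2>ell-s} can be applied, since that corollary's hypothesis $n-2>\ell-s$ just fails when $n=3$.
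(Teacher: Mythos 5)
Your proof is correct and follows essentially the same route as the paper's: the same trivial argument for $\ell=1$, the observation that each wild edge helps at most one color when $\ell=2$ (you phrase it via \cref{ceiling bound}, the paper argues it directly), the same $n=3$ versus $n>3$ split for $\ell=m-1$ resolved by \cref{prop:cycle wild number} and the machinery of \cref{thm:clb general connections}, and the same conclusion for $\ell=m$ (your direct counting argument is a slightly more elementary substitute for the paper's appeal to \cref{prop:most components geq k+1}, and you note that route as well). No gaps.
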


\begin{proof}
    First suppose $\ell=1$.  Then every edge in $G$ is already the same color, so $G$ has a spanning tree in this one color, and $\wild(G)=0$.  Here the number of necessary changes in edge colors is as small as possible, meaning the wild number of a graph is minimized when the number of colors is minimized.  
    
    Now suppose $\ell=2$.  Changing any edge to wild helps exactly one color, so we must change a total of $\kappa(G_1)+\kappa(G_2)-2$ edges.     

    Next suppose $\ell=m-1$ and $m>2$. If $n=3$, then because $G$ is a simple connected graph, $G = C_{3}$. So, in this case, from \cref{prop:cycle wild number}, $\wild(G)=1$.  Now suppose $n>3$. By applying \cref{thm:clb general connections} with $\kappa_{\max}=n-1$, $\kappa_{\min}=n-2$, and $\ell-s=1$, we have $\wild(G)>n-2$. Since $\wild(G)\leq n-1$ always, we get $\wild(G)=n-1$.
 
    Lastly, if $\ell=m$, then every edge in $G$ is a  different color, so by \cref{prop:most components geq k+1} with $\kappa_{\max}=\kappa_{\min}=n-1>1$, we get $\wild(G)=n-1$. In this case, the number of necessary changes of edge colors is as large as possible, meaning the wild number of a graph is maximized when the number of distinct edge colors is maximized.
\end{proof}

In what follows, we let $m_{\clb}$ denote the number of edges in $G$ whose color meets the component lower bound $\operatorname{clb}(G)$, that is, $m_{\clb} = \sum_{i \in I} \vert E_{i}(G) \vert $, where $I = \{ i : \kappa(G_i)  = \kappa_{\max}\}$. 

\begin{theorem}[]\label{thm:edges meeting clb}
Let $(G, \gamma)$ be an edge-colored graph with $n$ vertices and $m$ edges.  If $\clb(G)>m-m_{\clb}$, then $\wild(G)>\operatorname{clb}(G)$.  
\end{theorem}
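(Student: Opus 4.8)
The plan is to argue by contradiction, leaning on the component lower bound from \cref{component-bounds}, which guarantees $\wild(G)\geq\operatorname{clb}(G)$ for every edge-colored graph. So I would suppose instead that $\wild(G)=\operatorname{clb}(G)=\kappa_{\max}-1$ and fix an ideal wild set $W\subseteq E(G)$ with $|W|=\kappa_{\max}-1$, with the goal of deriving a contradiction with the hypothesis $\operatorname{clb}(G)>m-m_{\clb}$.

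The crux is the following observation about any color $c_i$ that attains the maximum, i.e.\ any $i$ in $I=\{i:\kappa(G_i)=\kappa_{\max}\}$. Recall $G_i^W$ has edge set $W\cup E_i(G)$; any wild edge of $W$ that already has color $c_i$ lies in $E_i(G)$ and so adds nothing to this edge set. Hence only the edges of $W\setminus E_i(G)$ can merge components of $G_i$, and each merges at most two. Since $G_i$ has $\kappa_{\max}$ components while $G_i^W$ is connected, we must have $|W\setminus E_i(G)|\geq\kappa_{\max}-1=|W|$, which forces $W\cap E_i(G)=\varnothing$. In other words, an ideal wild set of size $\operatorname{clb}(G)$ cannot contain a single edge whose color meets the component lower bound.

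Applying this to every $i\in I$ simultaneously gives $W\cap\bigl(\bigcup_{i\in I}E_i(G)\bigr)=\varnothing$, so $W$ is contained in the set of edges whose color does \emph{not} meet the component lower bound; that set has exactly $m-m_{\clb}$ elements. Therefore $\operatorname{clb}(G)=|W|\leq m-m_{\clb}$, contradicting the assumption $\operatorname{clb}(G)>m-m_{\clb}$. Hence no ideal wild set of size $\operatorname{clb}(G)$ exists, and $\wild(G)>\operatorname{clb}(G)$.

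I do not expect a genuine obstacle here; the only step that deserves explicit justification is the middle one — that wild edges already colored $c_i$ are useless for connecting $G_i$, so that connecting a maximum-component color genuinely consumes $\kappa_{\max}-1$ edges of other colors — but this follows directly from the definition of $G_i^W$. (The same conclusion could be extracted as a special case of the dip-counting in \cref{thm:clb general connections}, but the direct edge-count above is cleaner and self-contained.)
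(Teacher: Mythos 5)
Your proof is correct and follows essentially the same route as the paper's: assume an ideal wild set $W$ of size $\operatorname{clb}(G)$ exists, observe that every edge of $W$ must help each color attaining $\kappa_{\max}$ while an edge cannot help its own color, conclude $W$ avoids all $m_{\clb}$ such edges, and derive $\operatorname{clb}(G)\leq m-m_{\clb}$. Your version merely spells out the counting (each added edge of $W\setminus E_i(G)$ reduces the component count of $G_i$ by at most one) that the paper leaves implicit.
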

\begin{proof}
    Suppose $\wild(G)\leq \operatorname{clb}(G)$, and therefore $\wild(G)=\operatorname{clb}(G)$ as the wild number cannot be less than a lower bound.  Let $k=\clb(G)$.  Suppose there exists an ideal wild set $W$ of size $k$. Then every edge in $W$ must help each of the colors that need $k$ connections. So, since an edge cannot help its own color, $W$ cannot contain edges whose color needs $k$ connections. Thus we have $m-m_{\clb}$ edges to choose from when building $W$, so it must be that $k=\operatorname{clb}(G)\leq m-m_{\clb}$.
\end{proof}

As a corollary, we have another sufficient condition for a graph to have wild number $n-1$.

\begin{corollary}\label{cor: when m-s<n-2}
    Suppose $(G, \gamma)$ is an edge-colored graph with $n$ vertices, $m$ edges, and $s$ colors that are each used exactly once in the graph $G$. If $n-2>m-s$, then $\wild(G)=n-1$.
\end{corollary}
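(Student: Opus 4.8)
The plan is to recognize this as essentially a direct consequence of \cref{thm:edges meeting clb}, so the real content is the bookkeeping that identifies $\operatorname{clb}(G)$ and relates $m_{\clb}$ to $s$.

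First I would observe that the hypothesis $n-2>m-s$ secretly forces $s\geq 1$: since $G$ is connected, $m\geq n-1$, so if $s=0$ the inequality would read $n-2>m\geq n-1$, a contradiction. Hence there is at least one color used exactly once. For any such color $c_i$, the subgraph $G_i$ has $n$ vertices and a single edge, so $\kappa(G_i)=n-1$. Because $\gamma$ is surjective, every $G_i$ has at least one edge, so $\kappa(G_i)\leq n-1$ for all $i$; therefore $\kappa_{\max}=n-1$ and $\operatorname{clb}(G)=\kappa_{\max}-1=n-2$.

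Next I would bound $m_{\clb}$ from below. Each of the $s$ colors used exactly once satisfies $\kappa(G_i)=n-1=\kappa_{\max}$, so all $s$ of these colors lie in the index set $I=\{i:\kappa(G_i)=\kappa_{\max}\}$, and each contributes its single edge to $m_{\clb}=\sum_{i\in I}|E_i(G)|$. Thus $m_{\clb}\geq s$, which gives $m-m_{\clb}\leq m-s<n-2=\operatorname{clb}(G)$. Now \cref{thm:edges meeting clb} applies and yields $\wild(G)>\operatorname{clb}(G)=n-2$, i.e.\ $\wild(G)\geq n-1$. Combining with the universal upper bound $\wild(G)\leq n-1$ (take $W$ to be the edge set of any spanning tree of $G$; then each $G_i^W$ contains that spanning tree on all $n$ vertices and is connected, so $W$ is color-connecting), we conclude $\wild(G)=n-1$.

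I do not expect a genuine obstacle here, since the statement is a corollary of \cref{thm:edges meeting clb}. The only step requiring a moment's care is verifying that the hypothesis guarantees $s\geq 1$, and hence that $\kappa_{\max}=n-1$ and $\operatorname{clb}(G)=n-2$; without that observation the identification of $\operatorname{clb}(G)$ could fail and the chain of inequalities would not close.
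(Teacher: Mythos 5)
Your proof is correct and follows essentially the same route as the paper: identify $\operatorname{clb}(G)=n-2$, observe that the $s$ singleton colors all contribute to $m_{\clb}$ so that $m-m_{\clb}\leq m-s<n-2$, and invoke \cref{thm:edges meeting clb} together with the trivial upper bound $\wild(G)\leq n-1$. Your extra observations (that the hypothesis forces $s\geq 1$, and that $m_{\clb}\geq s$ suffices in place of the paper's claimed equality $m_{\clb}=s$) are sound and if anything slightly more careful than the paper's version.
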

\begin{proof}
    Though this result can be proven directly, we will show that it is a special case of \cref{thm:edges meeting clb}.
    Suppose $(G,\gamma)$ is an edge-colored graph where $s$ colors are each used exactly once. Then $\clb(G) = \max_{1\leq i\leq \ell}(\kappa(G_i)-1)=n-2$ and $s=m_{\clb}$. Assuming $n-2>m-s$, we have $\clb(G)>m-m_{\clb}$.
    So we can apply \cref{thm:edges meeting clb} to conclude that $\wild(G)>\operatorname{clb}(G)=n-2$.  Hence $\wild(G)=n-1$.
\end{proof}

\section{Conclusion}\label{sec:conclusion}
We end with a list of open questions which we invite our dear reader to explore.  We are confident that any graph theory enthusiast, from undergraduate students to those more experienced, will enjoy working on any of the following questions. Moreover, as this is a novel concept in graph theory, we are also certain that readers can develop their own interesting research directions.

In  \cref{sec:families} and \cref{sec:colors}, there are examples of families of graphs where $\wild(G)=n-1$, or $\wild(G)=1$.  These are extreme cases that can be considered, similar to the results on wild number zero discussed in \cref{sec:classic}. 

\begin{question}
    Can we classify the graphs $G$ such that $\wild(G)=1$ or $\wild(G)=n-1$?
\end{question}

In \cref{sec:families}, one may note that the wild number of a tree or cycle depends only on the number of vertices and the number of colors used.  This leads to the following question:
\begin{question}
Are there other graphs, or families of graphs, for which the wild number can be determined based simply on the number of colors used and the number of vertices, regardless of the particular edge-coloring of the graph?
\end{question}
\begin{question}
     What if we only considered edge-colored graphs for which the edge coloring is proper?  What could we say then about the wild number of the edge-colored graph? Could the wild number be determined based on certain characteristics of the proper edge-colored graph?
\end{question}

In \cref{sec:bounds}, \cref{greedy} is introduced, along with an example where the algorithm does not provide the wild number of the graph.  In our experience, with almost all of the examples we considered, the algorithm did lead us to the correct wild number of the graph. 
\begin{question}
    When does \cref{greedy} produce an ideal wild set?  Does it work for certain families of graphs? Does it work for certain edge-colorings? 
\end{question}

\begin{question}
    How far off can \cref{greedy} be from the actual wild number? Is it ever off by more than one? or two?\ldots
\end{question}

Given an edge-colored graph $G$, there are times when a certain edge \emph{must} be colored wild to produce an ideal wild set.  Is there a way to find all of these edges? Possibly, answering the following questions could help to determine these edges. 
\begin{question}  
    Can we characterize the edges $uv$ for which $\wild(G) =\wild(G/uv)=\wild(G)-1$?
\end{question}

\begin{question}
   Given an edge-colored graph, which edges are contained in every (or no) ideal wild set?
\end{question}

The last questions posed involve making connections to more classical graph theory questions. 

\begin{question}
     What is the interaction between color-connecting wild sets  and vertex coverings?  When is a color-connecting wild set a vertex covering?  When is a vertex covering a color-connecting wild set? When is it ideal?
\end{question}

\begin{question}
If $H$ is a subgraph of an edge-colored graph $G$, then how are $\wild(H)$ and $\wild(G)$ related? 
\end{question}

\subsection*{Acknowledgments} Part of this research was performed while the authors were visiting the Mathematical Sciences Research Institute (MSRI), now becoming the Simons Laufer Mathematical Sciences Institute (SLMath), which is supported by the National Science Foundation (Grant No.~DMS-1928930). In particular, the authors participated in SLMath's Summer Research in Mathematics (SRiM) program in 2023 and through generous post-programmatic support, the authors continued to work on the project through completion over the last two summers.  We thank SLMath and the NSF wholeheartedly for their generous support of our joyful and fruitful collaboration. 
%%%%%%%%%%%%%%%%%%%%%%%%%%%%%%%%%%%%%%%%%%%%%%%%%
% BIBLIOGRAPHY
\bibliographystyle{alphaurl}
\bibliography{WildBib}

\end{document}